\newcommand{\ep}{\varepsilon}
\newcommand{\TSP}{\mathrm{TSP}}
  \newcommand{\MST}{\mathrm{MST}}
\def\cH{{\mathcal H}}
\def\nn{\nonumber}
\def\a{\alpha} \def\b{\beta} \def\d{\delta} 
\def\e{\varepsilon}    \def\g{\gamma}
\def\G{\Gamma}  \def\k{\kappa}
\def\z{\zeta}     \def\l{\lambda}
  \def\n{\nu} \def\p{\pi}
\def\t{\tau}
\def\cP{{\cal P}}
\def\bx{{\bf x}}
\def\by{{\bf y}}
\newtheorem{theorem}{Theorem}
\newtheorem{lemma}[theorem]{Lemma}
\newtheorem{observation}[theorem]{Observation}
\newtheorem{claim}{Claim}
\newtheorem{proposition}[theorem]{Proposition}
\newcommand{\brac}[1]{\left(#1\right)}
\newcommand{\bfrac}[2]{\left(\frac{#1}{#2}\right)}
\def\cE{{\cal E}}
\newcommand{\set}[1]{\left\{#1\right\}}
\def\sm{\setminus}
\def\E{\mathbb{E}}
\def\Pr{\mathbb{P}}
\newcommand{\ignore}[1]{}
\def\cA{{\mathcal A}}
\def\cE{{\mathcal E}}
\def\cG{{\mathcal G}}
\def\cH{{\mathcal H}}
\def\cL{{\mathcal L}}
\def\cP{{\mathcal P}}
\def\cS{{\mathcal S}}
\def\cX{{\mathcal X}}
\def\cY{{\mathcal Y}}
\newcommand{\beq}[2]{\begin{equation}\label{#1}#2\end{equation}}
\newcommand{\mults}[1]{\begin{multline*}#1\end{multline*}}
\def\nn{\nonumber}
\def\cG{\mathcal{G}}
\begin{document}
\title{On Minimum Cost Rainbow Structures}
\author{Patrick Bennett\thanks{Department of Mathematics, Western Michigan University, Kalamazoo MI 49008-5248, Research supported in part by Simons Foundation Grant \#848648.}\and Quentin Dubroff\thanks{Department of Mathematical Sciences, Carnegie Mellon University, Pittsburgh PA15213.}\and Alan Frieze\thanks{Department of Mathematical Sciences, Carnegie Mellon University, Pittsburgh PA15213, Research supported in part by NSF grant DMS2341774.}\and Wesley Pegden\thanks{Department of Mathematical Sciences, Carnegie Mellon University, Pittsburgh PA15213, Research supported in part by NSF grant DMS1700365.}}
\maketitle
\date{}
\begin{abstract}
We discuss the expected minimum cost of rainbow  spanning trees and Hamilton cycles in randomly edge colored random graphs. 
\end{abstract}
\section{Introduction}
The minimum length of structures like spanning trees and Hamilton cycles among random points in Euclidean space has been a topic of interest for the better part of a century.  Beardwood, Halton and Hammersley \cite{BHH} showed that the minimum cost of a tour through $n$ random points in the unit square is asymptotic to $\b_{TSP} n^{1/2}$ with high probability (we say an event $\cE_n$ occurs {\em with high probability} or w.h.p.\ if $\Pr(\cE_n) \rightarrow 1$ as $n \rightarrow \infty$). Here $\b_{TSP}$ is a positive constant, still unknown after more than 60 years. Steele \cite{Steele1} generalised this result to {\em Euclidean Functionals} and showed that the minimum cost of a spanning tree on $n$ random points is asymptotic to $\b_{MST} n^{1/2}$, w.h.p. In both cases, the cost of edges is determined by the Euclidean distance between points. Steele's work shows that there are many instances of optimisation problems where the expected minimum cost grows like $\b n^{1/2}$ for unknown constants $\b$. Frieze and Pegden \cite{FP1} showed the constants for lengths of lower bound structures for the tour (like the minimum spanning tree, or twice a matching) really are distinct from the constant for the TSP, even though the best known explicit bounds on the constants generally overlap.

When the costs are generated independently, the situation becomes much clearer. In the case of spanning trees where the edges of the complete graph $K_n$ are given independent uniform $[0,1]$ random costs, Frieze \cite{F1} showed that $\E(L_n)\sim\z(3)=\sum_{k=1}^\infty\frac{1}{k^3}$, where $L_n$ denotes the (random) minimum cost of a spanning tree. 

The expected cost of the minimum cost Hamilton cycle in the complete graph $K_{n}$ is asymptotic to $\t=\frac12\int_{x=0}^\infty y(x)dx$ where $y$ is the positive solution to $\brac{1+\frac{x}{2}}e^{-x}+\brac{1+\frac{y}{2}}e^{-y}=1$. This was proved by Wastl\"und \cite{Wast1}. Frieze \cite{F2} showed that the expected minimum cost of a Hamilton cycle is asymptotically equal to  the expected minimum cost of a 2-factor. 

If we orient the edges then it follows from the work of Karp \cite{K1} and Aldous \cite{A1} that the expected minimum cost of a directed Hamilton cycle is asymptotic to $\z(2)=\sum_{k=1}^\infty\frac{1}{k^2}=\frac{\p^2}{6}$.

In this work we add a spot of color to the problems. We assume that the edges of the graphs in question have been given uniformly random colors and ask for minimum cost rainbow Hamilton cycles or spanning trees. In this context, a set $S$ of edges is {\em rainbow} colored if all the colors in $S$ are distinct. We do this within two contexts: either we have $n$ random points in $[0,1]^2$ and the cost of an edge is the Euclidean distance between its endpoints, or else we have independent uniform $[0,1]$ random costs.

In the first context we let $\cX=\set{\bx_1,\bx_2,\ldots,\bx_n}$ where the $\bx_i$ are independently chosen uniformly from the unit square $[0,1]^2$. The cost of edge $\set{\bx_i,\bx_j}$ is the Euclidean length $|\bx_i-\bx_j|$. %The edges will be randomly labelled with $q$ colors.
Our first results concern the length $Z_{\MST}$ of the minimum rainbow spanning tree in this model,  when we randomly color labeled from a set of $n-1$ colors.  (Note that with fewer colors, no rainbow spanning tree would exist. Also, even if we have enough colors there is always some small probability that there is no rainbow spanning tree at all, in which case $Z_{\MST}$ is undefined.)  We let $Z^*_{\MST}$ denote the length of the minimum spanning tree (without the rainbow constraint); recall that $Z^*_{\MST}\sim \beta_{\MST} \sqrt n$ w.h.p., for some constant $\beta_{\MST}$. 

% \qd{Note that there is a positive (very small) probability that there is no rainbow spanning tree (e.g. if all edges get the same color). So I think it doesn't quite make sense to talk about $\E(Z_{\MST})$, and I've changed the statement of the theorem below to a w.h.p. statement. If we want, we could replace w.h.p. with ``with probability $1 - O(1/n)$'' but I don't think we should. Our bounds in the proof of this theorem are strong enough to prove the expectation result conditioned on the existence of a rainbow spanning tree, but I think the statement is nicer as a w.h.p. statement.} \pbc{I agree and I think it's worth a comment like the the blue sentence above} 
\begin{theorem}\label{EucTrees}
 In the Euclidean setting, with edges labeled randomly by $n-1$ colors, we have $Z_{\MST} = \Theta(\sqrt n)$ w.h.p.\ and $Z_{\MST}-Z^*_{\MST}=\Theta(\sqrt{n})$ w.h.p.%}, both w.h.p. and in expectation.
\end{theorem}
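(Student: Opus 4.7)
The plan is to prove the upper bound $Z_{\MST}=O(\sqrt n)$ and the strict-gap lower bound $Z_{\MST}-Z^*_{\MST}=\Omega(\sqrt n)$ separately; the inequality $Z_{\MST}\geq Z^*_{\MST}$ is automatic by minimality. For the upper bound I would start with the classical Euclidean MST $T^*$, of length $\Theta(\sqrt n)$ w.h.p.\ by Steele's theorem. Its $n-1$ edges receive i.i.d.\ uniform colors from $\{1,\ldots,n-1\}$, so a standard balls-and-bins argument gives that w.h.p.\ $(1-1/e+o(1))(n-1)$ distinct colors appear in $T^*$; keeping one representative per color yields a rainbow subforest $F\subseteq T^*$ of length at most $Z^*_{\MST}$ with $K\sim n/e$ components. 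The task reduces to splicing in $K-1$ edges of total length $O(\sqrt n)$, one per unused color, to extend $F$ into a spanning rainbow tree. Each unused color labels a uniformly random pool of $\sim n/2$ edges of $K_n$; grouping neighboring components of $F$ into constant-size super-clusters via a geometric partition shows that each unused color contributes $\Omega(1)$ short candidate crossing edges per adjacent pair of super-clusters in expectation. A Hall-type bipartite matching, or equivalently the defect form of Edmonds's matroid intersection theorem applied to the graphic and partition matroids restricted to edges of length $O(1/\sqrt n)$, then produces the required augmentation.

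For the lower bound, given any rainbow spanning tree $T$, I would invoke the strong (Brualdi) basis-exchange property of the graphic matroid to obtain a bijection $\phi:T^*\setminus T\to T\setminus T^*$ with $(T^*\setminus\{f\})\cup\{\phi(f)\}$ a spanning tree for every $f$. Then
\[
|T|-|T^*|\;=\;\sum_{f\in T^*\setminus T}(|\phi(f)|-|f|)\;\geq\;\sum_{f\in T^*\setminus T}(g(f)-|f|),
\]
where $g(f)$ is the length of the second-shortest edge in the fundamental cut of $f$ in $T^*$; the inequality uses MST-optimality of $T^*$ together with the fact that $\phi(f)$ lies in this cut. Combinatorially, by birthday-type arguments $T^*$ has $\sim n/e$ edges whose color-class in $T^*$ has size $\geq 2$ w.h.p., and since $T$ is rainbow it must drop at least one edge from each such class, forcing $|T^*\setminus T|\geq(1+o(1))n/e$. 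Geometrically, for a constant fraction of MST edges $f$ one expects $g(f)-|f|=\Omega(1/\sqrt n)$, which should follow from local Poisson-process arguments on patches of side $\Theta(1/\sqrt n)$ around each $f$, in the spirit of Steele's umbrella theorems and the Frieze--Pegden proof that $\beta_{TSP}>\beta_{MST}$. Multiplying yields the claimed $\Omega(\sqrt n)$.

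The principal obstacle is the geometric gap estimate: showing that, with positive probability over a typical small patch of the point process, the second-shortest edge in the fundamental cut of a uniformly chosen MST edge strictly exceeds the MST edge by $\Omega(1/\sqrt n)$. This requires understanding the joint local structure of an MST edge and competing crossing edges at the natural scale $1/\sqrt n$, and likely reduces to a calculation for the limiting Poisson MST on the plane. A secondary technical challenge, on the upper-bound side, is coordinating the augmentation so that tree structure, short lengths, and unused colors are simultaneously realized; matroid intersection with rank-defect bounds, or a carefully staged greedy procedure, is the natural tool.
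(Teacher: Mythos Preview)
Your plan diverges from the paper in both halves. For the upper bound the paper does not repair $T^*$: it sorts the points by second coordinate, and for each point $\bx_i$ (except the top one) looks only at edges to points with larger second coordinate; a rainbow set of $n-1$ such ``upward'' edges is automatically acyclic and hence a spanning tree. The existence of a cheap such set is recast as a perfect matching in a bipartite graph $\Gamma$ on $\cX_1\cup C$, where an edge $(\bx,c)$ records that some short upward edge from $\bx$ received color $c$. Two carefully designed edge families $E_1,E_2$ (the $K$ shortest upward edges from each vertex, plus a layered family giving each color enough neighbors) let one verify Hall's condition, and the \emph{total} weight of all edges in $\Gamma$ is already $O(\sqrt n)$. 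Your ``keep a rainbow subforest of $T^*$ and augment'' route is natural, but the augmentation is itself a weighted matroid-intersection instance of the same flavor as the original problem: you must select one short edge per unused color so that the result is connected, and the super-cluster sketch does not yet show that a short common independent set of full rank exists.

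For the lower bound your Brualdi-exchange framework and the inequality $|\phi(f)|\ge g(f)$ are fine, but the crucial step has a real gap: $g(f)$ is \emph{not} a local quantity. The fundamental cut of $f$ in $T^*$ is the bipartition of $[n]$ induced by deleting $f$, and the second-shortest crossing edge can sit anywhere along the global interface between the two components, arbitrarily far from $f$; a patch of side $\Theta(1/\sqrt n)$ around $f$ does not see it, so ``local Poisson-process arguments'' do not control $g(f)-|f|$. You also slide over the adversarial choice: even granting that a constant fraction of MST edges have a large gap, the minimum rainbow tree gets to choose which edge to drop from each repeated color class, so ``multiplying'' $n/e$ by $\Omega(1/\sqrt n)$ needs an extra random-coloring argument (that $\Theta(n)$ color classes have \emph{two} large-gap edges). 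The paper avoids all of this with a much simpler, genuinely local device: it finds $\Theta(n)$ pairwise well-isolated $(\e,D)$-copies of a two-point set (two points at distance $\approx 1$ in the $\sqrt n\times\sqrt n$ rescaling, with no other point within distance $D$), notes that $\Theta(n)$ colors repeat among the $\Theta(n)$ intra-pair edges, so any rainbow tree must omit $\Omega(n)$ of them; for each omitted pair, swapping in the length-$\approx 1$ intra-pair edge and out a length-$>D$ edge leaving the copy shortens the tree by at least $D-2$, and since the copies are far apart these swaps can all be performed. This yields $Z^*_{\MST}\le Z_{\MST}-\Omega(\sqrt n)$ without ever analysing the global cut structure of $T^*$.
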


In particular, Theorem \ref{EucTrees} says that the minimum length doesn't change by more than a constant factor under a rainbow constraint, but that a constant-factor change does necessarily occur.  To put Theorem \ref{EucTrees} in context, there is an important sense in which it is much less obvious that the minimum rainbow spanning tree should have length $\Theta(\sqrt{n})$ than it is for the various optimization problems (matchings, 2-factors, linear programming relaxations of the TSP, etc) to which the general methods derived from Beardwood, Halton, and Hammersley and from Steele have been applied.  In particular, the rainbow condition seems to preclude a straightforward application of subadditivity, which explains, for example, why we do not yet know that $Z_{\MST}/\sqrt{n}$ has a limit.  In some sense, the success of subadditivity approaches to structure lengths in random point-sets reflects the sense in which the asymptotic structure lengths really depend only on the local geometry of the ambient space.  (For example, in the Beardwood-Halton-Hammersley theorem, the unit square can be replaced by any bounded open set of $\mathbb{R}^2$ of measure 1, and the asymptotic length of a tour through $n$ points remains the same, with the same unknown constant $\beta_{\TSP}$. But the rainbow condition we impose is a global constraint on the spanning tree that cannot be easily decomposed into noninteracting parts.

For the minimum length $Z_{\TSP}$ of a rainbow Hamilton cycle in the Euclidean model, we do not achieve an optimal result up to constant factors unless we allow a slight excess in the number of available colors.  

\begin{theorem}\label{EucTours}
In the Euclidean setting, with edges labeled randomly by $q=(1+\ep)n$ colors, we have (a) $Z_{\TSP}=O(n^{1/2}\log n)$, and (b) $Z_{\TSP}-Z_{\TSP}^*=\Omega(\sqrt{n})$ w.h.p. Indeed, (a) holds for any set of $n$ points with randomly colored edges.
\end{theorem}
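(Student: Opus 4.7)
For part (a), my plan is to transform a cheap (non-rainbow) Hamilton cycle into a rainbow one via a block-based patching argument that works for any $n$ points in $[0,1]^2$. Start from a Hamilton cycle $H_0$ of length at most $\sqrt{2n}+O(1)$, which exists by Few's theorem. Set $L=C\log n$ for a sufficiently large constant $C=C(\ep)$, and partition the cyclic edge-sequence of $H_0$ into $m=n/L$ consecutive blocks $B_1,\dots,B_m$ of $L$ edges each (so each block has $L+1$ vertices, with adjacent blocks sharing a boundary vertex). Processing blocks in order $k=1,\dots,m$, the plan is to replace the sub-path of $H_0$ inside $B_k$ by a fresh Hamilton path $P_k$ between the two boundary vertices of $B_k$, using edges of $K_n$ among $B_k$'s vertices, such that (i) the $L$ colors of $P_k$ are distinct, and (ii) these colors avoid the set $U_k$ of colors already assigned by $P_1,\dots,P_{k-1}$. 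The concatenation $P_1,\dots,P_m$ is then a rainbow Hamilton cycle.

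Length control: every edge of $P_k$ has Euclidean length at most the diameter of $B_k$, which by triangle inequality along the original sub-path of $H_0$ is at most $\lambda_k$, the total length of the $L$ edges of $H_0$ in $B_k$. Hence $|P_k|\le L\lambda_k$, giving
\[
\sum_k |P_k| \le L \sum_k \lambda_k = L\cdot|H_0| = O(\sqrt{n}\log n).
\]
Existence of $P_k$: writing $A_k=[q]\setminus U_k$, one has $|A_k|\ge q-(k-1)L\ge \ep n$, so the expected number of Hamilton paths satisfying (i),(ii) is at least
\[
(L-1)!\cdot\frac{|A_k|(|A_k|-1)\cdots(|A_k|-L+1)}{q^L}\ \ge\ (L-1)!\,c^L
\]
for a constant $c=c(\ep)>0$. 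With $C$ chosen so that $Lc/e>1$, this quantity is superpolynomial in $n$, and a second-moment calculation then shows that at least one valid $P_k$ exists with failure probability $o(1/n)$; a union bound over the $m$ blocks handles all blocks w.h.p.

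The principal technical point will be the second-moment calculation for the count of rainbow Hamilton paths in $K_{L+1}$ whose colors also avoid a prescribed set of up to $n$ forbidden colors: pairs of paths sharing many edges are positively correlated. Fortunately, each block has only $L+1=O(\log n)$ vertices and $|A_k|\ge \ep n \gg L^2$, so the standard partition of path-pairs by shared-edge count keeps the correlation terms under control.

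For part (b), where the points are random in $[0,1]^2$, my plan follows the strategy of Theorem \ref{EucTrees}(b). In a random point set there are $\Omega(n)$ pairs of points within distance $O(1/\sqrt n)$, and the optimal non-rainbow tour $H^*$ uses the short edge within most such pairs. With a uniformly random $q$-coloring, a constant fraction of these short edges share a color with another edge used in $H^*$, and the rainbow constraint then forces their replacement by a longer alternative costing $\Omega(1/\sqrt n)$ extra; summing these $\Omega(n)$ increments yields $Z_{\TSP}-Z_{\TSP}^*=\Omega(\sqrt n)$ w.h.p.
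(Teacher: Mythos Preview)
For (a), your block-patching idea is genuinely different from the paper's and the length bookkeeping $\sum_k|P_k|\le L|H_0|=O(\sqrt n\log n)$ is correct. The paper instead runs a greedy algorithm on all but $O(\sqrt n)$ reserved vertices, adding at each step the cheapest edge extending the current path system with a fresh color; a pigeonhole over $\Theta((n-k)^2/n)$ sub-squares shows the $k$th edge costs $O(\sqrt n/(n-k))$, and summing gives $O(\sqrt n\log n)$; the reserved vertices then patch the path-ends into a cycle.

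However, your second-moment step has a real gap. For pairs of Hamilton $s$--$t$ paths in $K_{L+1}$ sharing $j$ edges the correlation ratio is $q^j/(|A_k|)_j\approx((1+\ep)/\ep)^j$, and two uniformly random such paths share $\Theta(1)$ edges in expectation, so one only obtains $\Var(X)/\E[X]^2=\Theta(1)$; Chebyshev then gives $\Pr(X>0)\ge c>0$ but not $1-o(1/n)$, and the union bound over $n/L$ blocks fails. Your approach is salvageable: first expose which edges of the block have color in $A_k$ (each independently with probability $\ge\ep/(1+\ep)$), obtaining $G(L+1,p)$ with constant $p$, which for $L=C\log n$ with $C$ large is Hamilton-connected except with probability $n^{-\Omega(C)}$; among these good edges the colors are i.i.d.\ uniform in $A_k$ with $|A_k|\gg L^2$, and a birthday estimate then controls color collisions within blocks. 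This replaces the second moment and delivers the needed per-block failure probability.

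For (b) the gap is more fundamental. You must show that \emph{every} rainbow tour exceeds $Z^*_{\TSP}$ by $\Omega(\sqrt n)$, not merely that the particular tour $H^*$ cannot be made rainbow cheaply; and the assertion that $H^*$ uses most short close-pair edges is unjustified (a short edge between two near points need not lie on an optimal tour). The paper instead locates $\Omega(n)$ well-isolated $(\ep,D)$-copies of a six-point configuration (three close pairs at the corners of a unit triangle) and invokes structural results from \cite{FP1} (Propositions~\ref{2meansstraight} and~\ref{nothree}) showing that any tour which does not use all three short pair-edges in such a copy can be locally shortened by a fixed amount $\gamma>0$. Since among the $\Theta(n)$ short pair-edges across copies a linear number of colors repeat (Lemma~\ref{repeat}), any rainbow tour must omit a short edge in $\Omega(n)$ copies, each costing $\gamma$, which yields the separation.
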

Here like before the starred version $Z^*_{\TSP}$ represents the minimum length in the absence of the rainbow condition.

In the second scenario, $[0,1]^2$ is replaced by the complete graph $K_n$ and the costs $C(i,j)$ are independent uniform $[0,1]$ random variables.  We prove a general theorem (see Theorem \ref{rweight}) for cheap rainbow substructures in randomly colored structures with random costs. The proof is an adaptation of some of the recent work on thresholds in spread hypergraphs (see \cite{BFM}) based on Frankston, Kahn, Narayanan, and Park's recent proof of the fractional Kahn--Kalai conjecture \cite{FKNP}. Our Theorem \ref{rweight} implies the following:
\begin{theorem}\label{th2} 
The minimum cost solution of the following problems have value $O(1)$ w.h.p. The proofs are non-constructive.
\begin{enumerate}[(a)]
\item Traveling Salesperson Problem in $K_n$ with at least $n$ colors for the edges. 
\item Minimum cost perfect matching in complete $k$-uniform hypergraphs, $k\geq 2$ with at least $n/k$ colors for the edges.
\item Minimum cost classes of degree bounded spanning trees in $K_n$ with at least $n-1$ colors for the edges e.g. spanning binary trees.
\end{enumerate}
\end{theorem}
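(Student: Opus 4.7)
The plan is to deduce all three parts of Theorem~\ref{th2} as direct consequences of Theorem~\ref{rweight}, so the work reduces to exhibiting, for each problem, a natural probability measure $\mu$ on the family $\cH$ of candidate structures that is $q$-spread in the sense that $\mu(\{H : S \subseteq H\}) \le q^{|S|}$ for every edge set $S$. Informally, when the elements of $\cH$ have size $m$, the combined random-colouring and random-cost model controlled by Theorem~\ref{rweight} should produce \whp\ a rainbow $H \in \cH$ of total cost $O(mq)$; so the strategy in each case is to produce a spread distribution for which $mq = O(1)$.

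For (a), I take $\cH$ to be the Hamilton cycles of $K_n$ under the uniform measure. A standard counting argument --- any path-forest of $s$ edges that can be extended to at least one Hamilton cycle does so in at most a $(C/n)^{s}$-fraction of all Hamilton cycles --- shows that $\mu$ is $(C/n)$-spread, giving $mq = n \cdot O(1/n) = O(1)$. For (b), let $\cH$ be the perfect matchings of the complete $k$-uniform hypergraph under the uniform $\mu$; iterating the exact ratio $M_{n-k}/M_n = (k-1)!/\prod_{j=1}^{k-1}(n-j)$ over the edges of a partial matching $S$ gives $(c_k/n^{k-1})$-spreadness with $c_k = (k-1)!$, so $mq = O(n^{2-k}/k) = O(1)$ for every $k \ge 2$. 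In both cases Theorem~\ref{rweight} immediately yields a rainbow structure of cost $O(1)$.

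For (c), fix a degree bound $d \ge 3$ (with $d=3$ giving spanning binary trees) and let $\cH$ be the set of spanning trees of $K_n$ of maximum degree at most $d$. My plan is to generate a random degree-bounded tree by sampling a uniform Pr\"ufer sequence in $[n]^{n-2}$ conditioned on every symbol appearing at most $d-1$ times, and to bound the probability that a given edge set $S$ occurs in the resulting tree by comparing the restricted and unrestricted Pr\"ufer measures and applying concentration estimates to the per-symbol frequencies. This should yield $q = O(1/n)$, and with $m = n-1$ one obtains $mq = O(1)$, so Theorem~\ref{rweight} again produces a rainbow degree-bounded spanning tree of cost $O(1)$. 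The principal difficulty in the whole argument is precisely this spread bound in the bounded-degree regime: the counts for (a) and (b) are essentially classical, whereas in (c) the restriction to $\cH$ cuts out a thin slice of all spanning trees and one has to verify that uniformity over the remaining slice still assigns $O(1/n)$ mass to any given edge.
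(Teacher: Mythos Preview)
Your proposal is correct and follows exactly the paper's route: each part of Theorem~\ref{th2} is deduced from Theorem~\ref{rweight} by exhibiting an $\Omega(n^{k-1})$-spread measure on the relevant family, so that the cost bound $O(r/\kappa)=O(1)$ applies; the paper's own application is a one-line assertion of the spread, so your sketches for (a) and (b) already go further than what appears there. One caution on (c): the naive comparison of restricted and unrestricted Pr\"ufer measures does not work directly, since the degree constraint removes an $e^{-\Theta(n)}$ fraction of all sequences and the ratio blows up for small $|S|$; you would need to show that conditioning on $S\subseteq T$ depresses $\Pr(\Delta(T)\le d)$ by at most a $C^{|S|}$ factor, which is where the real work lies. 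A shorter alternative for ``max degree $\le d$'' classes is to note that the uniform measure on Hamilton paths is already supported in the class and is $\Omega(n)$-spread, so (c) reduces to the same calculation as (a).
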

The rest of the paper is organized as follows. In Section \ref{sec:treeUB} we prove the first assertion of Theorem \ref{EucTrees}, i.e.\ that $Z_{\MST} = \Theta(\sqrt{n})$ w.h.p. In Section \ref{sec:tours} we prove the first assertion of Theorem \ref{EucTours}, i.e.\ that $Z_{\TSP}=O(\sqrt {n}\log n)$ for $n$ colors. In Section \ref{sec:separate} we prove the remaining assertions in Theorems \ref{EucTrees} and \ref{EucTours}, i.e.\ that $Z_{\MST}-Z^*_{\MST}=\Theta(\sqrt{n})$ for $n$ colors and that $Z_{\TSP}-Z_{\TSP}^*=\Theta(\sqrt{n})$ for $(1+\e)n$ colors. In Section \ref{sec:spread} we prove Theorem \ref{th2}. Finally, we make some observations and state open questions in Section \ref{sec:conclusion}.

\section{Finding a rainbow spanning tree with $n-1$ available colors}\label{sec:treeUB}
In this section, we describe a simple polynomial time algorithm for finding a rainbow spanning tree of cost $O(n^{1/2})$. (We realise that finding a cheapest rainbow tree is solvable in polynomial time, see for example Lawler \cite{Law}. But the weighted matroid intersection algorithm is not so easy to analyse from a probabilistic point of view.) For $\bx=(x,y)\in \cX$ we let $\xi_1(\bx)=x,\xi_2(\bx)=y$. Assuming that $\bx_1=(x_1,y_1),\bx_2=(x_2,y_2),\ldots,\bx_{n}=(x_n,y_n)$ are ordered in increasing value of $\xi_2$, let $\cX=\set{\bx_1,\bx_2,\ldots,\bx_{n}}$ and $\cX_1=\set{\bx_1,\bx_2,\ldots,\bx_{n - 1}}$.

We define a weighted bipartite (multi-)graph $\G$ with bipartition $\cX_1 \cup C$ where $C=[n-1]$ is our set of colors. The edge set will be the union $E_1\cup E_2$ of sets which we define next; for both sets $E_1$ and $E_2$, an edge $(\bx,c)$ will correspond to some edge $(\bx,\bx')$ with $\xi_2(\bx) < \xi_2(\bx')$ in the embedded graph that has been assigned color $c$. This last property guarantees that a perfect matching in $\G$ corresponds to a rainbow spanning tree in the embedded graph. So to prove the first assertion in Theorem~\ref{EucTrees}, it suffices to show that w.h.p.
\begin{equation}\label{match}
\text{$\G$ contains a perfect matching,}
\end{equation}
and
\begin{equation}\label{wt}
\text{the total weight of all edges in $\G$ is $O(n^{1/2})$.}
\end{equation}

First we define  the set of directed edges
\[
E^+=\{(\bx,\by)\in \cX_1\times \cX:\; \xi_2(\by)>\xi_2(\bx)\}.
\]
For $\bx\in \cX$, we let $N(\bx)=\set{\by:(\bx,\by)\in E^+}$. We let $K$ be a constant large enough to make the argument work and define $E_1$ as follows. For each $\bx\in \cX_1,\,c\in C$ there is an edge $e_{\bx,c}=\set{\bx,c}$ in $E_1$ if $c$ is one of the colors of the $K$ shortest edges in $N(\bx)$ (noting that if $i > n - K$, then we include all edges $e_{\bx,c}$ where $c$ is some color of an edge in $N(\bx_i)$). We will prove that the total weight of the edges in $E_1$ is $O(n^{1/2})$ w.h.p.\ at the end of this section.

In addition, there is another set of edges $E_2$. We first define a set $A\subseteq \cX_1$. For a large positive value $B$ and $1\leq j\leq L=\log^2n$, we let 
\[
A_j=\set{\bx\in \cX_1:\exists \by\ \text{s.t.} (\bx,\by)\in E^+\text{ and }|\bx-\by|\in \left[\frac{Bj^2}{\sqrt{n}}, \frac{B(j+1)^2}{\sqrt{n}}\right)}.
\]
%If $\eta > n^{-1/3}$ and we condition {\red (I don't think we actually want to condition)} on $\xi_2(\bx) < 1 - \eta/2$, 
We then let $A=\bigcap_{j=1}^{L}A_j$. We will show that $A$ is most of $\cX$, but first, we finish defining the edge set $E_2$. 

Define a set $E_A$ by choosing, for each $\bx \in A$ and $j \in [L]$, one edge  $(\bx,\by)\in E^+$ incident to $\bx$ of length in the interval
\begin{equation}
  \label{interval}
        [Bj^2/\sqrt{n},B(j+1)^2/\sqrt{n}).
\end{equation}
The \emph{level} of $e \in E_A$ is the unique $j$ such that the length of $e$ lies in $[Bj^2/\sqrt{n},B(j+1)^2/\sqrt{n})$. We obtain $E_2$ by adding, for each color $c$, the edges $(\bx, c)$, where $\bx$ is a lower endpoint of one of the $K$ lowest level edges of color $c$ in $E_A$; here, we break ties by choosing a random ordering of the edges of each fixed level (and this random ordering is chosen independently for each distinct color). Note that if 
$|A|$, is large, as we will now show, then our choice of $L = \log^2 n$ guarantees that each color $c$ has $K$ incident edges in $E_2$ w.h.p.

We have that for any $j\in[L]$, if $\xi_2(\bx) < 1 - n^{-2/5}$, then the probability that $\bx\notin A_j$ is at most 
\beq{Aeq}{
\brac{1 - \frac{2\p B^2j^3}{n}}^{n-1} \leq e^{-6B^2j^3}.
}
So, 
\[\Pr(\bx \not\in A) \leq  \Pr(\bx \not\in A \,|\, \xi_2(\bx) < 1 - n^{-2/5}) + n^{-2/5} \leq \sum_j e^{-6B^2j^3} + n^{-2/5}.\]
For large $B$ (and $n$), we find
\[\Pr(\bx \not \in A) < e^{-5B^2}.\]
So
\beq{sizeA}{
\E(|A|)\geq n - e^{-5B^2}n = n(1-e^{-5B^2}).
}
We now use an inequality of Warnke to show that $|A|$ is large with very high probability.
\begin{lemma}[Warnke~\cite{War}]\label{warnke}
Let $W=(W_1,W_2,\ldots,W_n)$ be a family of independent random variables with $W_i$ taking values in a set $\Lambda_i$. Let $\Omega=\prod_{i\in[n]}\Lambda_i$ and suppose that $\cG \subseteq \Omega$ and $f:\Omega\to{\mathbb R}$ are given. Suppose also that whenever $\bx,\bx'\in \Omega$ differ only in the $i$-th coordinate 
\[
|f(\bx)-f(\bx')|\leq \begin{cases}c_i&if\ \bx\in\cG.\\d_i&otherwise.\end{cases}
\]
If $Y=f(X)$, then for all reals $\gamma_i>0$,
\begin{equation}\label{Wineq}
\Pr(Y\geq \E(Y)+t)\leq \exp\set{-\frac{t^2}{2\sum_{i\in[n]}(c_i+\gamma_i(d_i-c_i))^2}}+\Pr(W\notin \cG)\sum_{i\in [n]}\gamma_i^{-1}.
\end{equation}
\end{lemma}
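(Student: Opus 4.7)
The plan is to prove this via a martingale concentration argument that refines the standard Azuma--Hoeffding approach so as to exploit the fact that the smaller constants $c_i$ hold only on the typical event $\cG$. First I would set up the Doob martingale $Y_i = \E(f(W) \mid W_1, \ldots, W_i)$ for $i = 0, 1, \ldots, n$, with martingale differences $D_i = Y_i - Y_{i-1}$. A direct application of Azuma--Hoeffding using the worst-case bounds $d_i$ would give a much weaker inequality, so the main task is to genuinely use the fact that $|D_i| \leq c_i$ whenever the partial configuration lies in (the projection of) $\cG$.

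The key construction is a coupled random variable $\tilde Y = \tilde f(W)$, together with its Doob martingale $\tilde Y_i$, which agrees with $Y$ on $\cG$ and which satisfies uniform bounded differences $|\tilde Y_i - \tilde Y_{i-1}| \leq c_i + \gamma_i(d_i - c_i)$ on all of $\Omega$. One natural way to build $\tilde Y$ is to reveal the coordinates $W_1, W_2, \ldots$ one at a time and, whenever revealing $W_i$ would drive the conditional probability of $\cG$ below a threshold governed by $\gamma_i$, cap the current increment at $c_i + \gamma_i(d_i-c_i)$ rather than the full $d_i$; small $\gamma_i$ corresponds to an aggressive cap (closer to $c_i$) but a larger reconciliation cost later. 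Once $\tilde Y$ is built, applying classical Azuma--Hoeffding to the martingale $(\tilde Y_i)$ yields the exponential term $\exp\{-t^2 / 2 \sum (c_i + \gamma_i(d_i-c_i))^2\}$ of \eqref{Wineq}.

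To transfer the bound back to $Y$, I would use that $Y = \tilde Y$ on $\cG$, so the event $\{Y \geq \E(Y) + t\}$ is covered by $\{\tilde Y \geq \E(\tilde Y) + t - \Delta\}$ together with a ``truncation-triggered'' event, where $\Delta$ accounts for $\E(\tilde Y) - \E(Y)$. A Markov-type estimate controls the latter: the probability that coordinate $i$ triggers truncation is at most $\Pr(W \notin \cG)$, and each such event contributes at most $\gamma_i^{-1}$ to the total discrepancy, producing exactly the additive correction $\Pr(W \notin \cG)\sum_i \gamma_i^{-1}$. The main obstacle is engineering $\tilde Y$ so that it is simultaneously (i) measurable in the filtration generated by $W_1, \ldots, W_i$, (ii) a (super)martingale with the right mean relationship to $Y$, and (iii) actually achieves the sharp increment bound $c_i + \gamma_i(d_i-c_i)$ rather than something larger; balancing these three constraints is what forces the precise form of the correction term and is the technical heart of the argument.
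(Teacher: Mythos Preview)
The paper does not prove this lemma at all: it is quoted verbatim as a black box from Warnke~\cite{War} and then applied. So there is no ``paper's own proof'' to compare against; the relevant comparison is with Warnke's original argument.

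Your outline is in the right spirit --- Warnke's proof does proceed by constructing an auxiliary function with global bounded differences $c_i+\gamma_i(d_i-c_i)$, applying Azuma--Hoeffding to it, and then paying the correction $\Pr(W\notin\cG)\sum_i\gamma_i^{-1}$ to pass back to $f$. However, your description of the construction is not quite how it goes and, as written, would be hard to make precise. Warnke does not reveal coordinates sequentially and cap increments when a conditional probability crosses a threshold; rather, he builds a deterministic function $g:\Omega\to\mathbb{R}$ (before any martingale is taken) by, roughly, replacing $f(\bx)$ for $\bx\notin\cG$ with a suitable average/projection onto nearby points of $\cG$, chosen so that the single-coordinate Lipschitz constants of $g$ are at most $c_i+\gamma_i(d_i-c_i)$ everywhere. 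The correction term then arises cleanly from bounding $\Pr(f(W)\neq g(W))$ and $|\E f(W)-\E g(W)|$ in terms of $\Pr(W\notin\cG)$ and the $\gamma_i$, via Markov's inequality. Your ``cap the increment when the conditional probability of $\cG$ drops'' scheme would have trouble simultaneously ensuring measurability, the exact increment bound, and the clean form of the error term; if you want to flesh out a proof, it is easier to follow Warnke's route of modifying the function first and only then forming the Doob martingale.
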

To use this lemma we need to define a ``good'' event $\cG$. So, we let $\cG$ be the event that  for all \bx,
\[
\n_\bx=|\set{\by:|\bx-\by|\leq 2B\log^{ 4}n/n^{1/2}}|  < 15B^2 \log^{ 8}n.
\]
The Chernoff bounds imply that 
\begin{equation}\label{negG}
\Pr(\neg\cG)=e^{-\Omega(\log^8n)}.
\end{equation}
To apply Lemma \ref{warnke} we let $W_i=\bx_i$ and $f(\cX)=n - |A|$ and take $c_i={ 30B^2 }\log^{ 8}n$ and $d_i=n$ for $i\in[n]$. Putting $\g_i=n^{-3}$ and our bound on $\Pr(\neg\cG)$ in \eqref{Wineq}, we see that $|A| > \E(|A|) -  n^{2/3}$ w.h.p. We conclude that w.h.p.
\begin{equation}\label{Asize}
    |A| > (1 - e^{-5B^2})n.
\end{equation}

\bigskip

The purpose of this construction of the edge set $E_2$ is to guarantee the following property. 
\begin{observation}
\label{symmetry}  The (multi)-set of neighbors $\bx$ for a color $c$ in $E_2$ is symmetric with respect to permutation of labels of $A$.
\end{observation}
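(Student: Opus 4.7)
The plan is to read the observation as a statement of exchangeability: for any permutation $\pi$ of the labels of $A$, the random multiset $N_c = \{\bx : (\bx,c) \in E_2\}$ has the same distribution as its image under $\pi$. I would prove this by isolating each source of randomness used in the construction of $E_2$ and checking that none of them privileges any particular label of $A$.

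First, I would observe that the ingredients feeding into $E_2$ split into label-invariant geometric data (the point locations, the directed set $E^+$, the random coloring of edges, the sets $A_j$ and $A$, and the level function on edges) together with two explicit randomized steps: (i) the choice of one edge $(\bx,\by) \in E^+$ at each level $j$ for each $\bx \in A$ used to form $E_A$, and (ii) the independent random ordering within each level used to break ties when selecting the $K$ lowest-level edges of each color $c$ from $E_A$. For step (i), the construction can be made label-equivariant by picking, say, the shortest eligible edge at each level and breaking any length ties via a fresh source of randomness that does not see the labels of $A$; then for a permutation $\pi$, the event that $(\bx,\by)$ is chosen at level $j$ corresponds exactly to the event that $(\pi(\bx), \pi(\by))$ is chosen at level $j$ after relabeling, and the two events have equal probability. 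For step (ii), the random orderings are explicitly chosen independently for each color $c$ and depend only on the set of level-$j$ edges of that color, not on the labels assigned to their endpoints in $A$.

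Combining these two observations, the joint distribution of $(E_A, \{N_c\}_{c \in C})$ is invariant under the induced action of any permutation $\pi$ of labels of $A$. In particular, for each fixed color $c$, the multiset $N_c$ is exchangeable with respect to permutations of $A$, which is precisely the claim. The only point requiring a little care is step (i): one must make explicit a selection rule for the edge in $E_A$ that does not secretly depend on labels, but since the definition of $E_A$ only specifies length constraints, we are free to fix any label-symmetric rule, so this is a matter of bookkeeping rather than a genuine obstacle.
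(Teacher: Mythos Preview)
Your reading of the observation as exchangeability of $N_c$ is correct, and the strategy of checking each source of randomness is reasonable. But the execution puts the emphasis in the wrong place and misses the one structural fact that makes the paper's proof a single line.

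The paper simply notes that each $\bx\in A$ is the lower endpoint of \emph{exactly one} edge of $E_A$ at each level $j\in[L]$. This yields a bijection $E_A\leftrightarrow A\times[L]$. Through that bijection, the colours on $E_A$ are i.i.d.\ uniform (the underlying edges of $E_A$ are distinct), and the tie-breaking orderings are uniform and independent; the rule ``take the first $K$ of colour $c$ by level'' is then transparently equivariant under permutations of the $A$-coordinate, and exchangeability follows immediately.

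Your step~(i), which worries about making the selection of the physical edge $(\bx,\by)$ at each level label-equivariant, is a red herring: the identity of $\by$ never enters the computation of $N_c$ (only the level and the colour of the edge matter), so any rule---deterministic, random, or even adversarial---that produces one edge per $(\bx,j)$ pair suffices. Meanwhile you relegate ``the random coloring of edges'' to a list of passive ``label-invariant geometric data'' without argument; but this is the principal source of randomness, and the \emph{reason} it acts symmetrically on $A$ is precisely the one-edge-per-vertex-per-level bijection, which you never state. Once that bijection is recorded, your argument collapses to the paper's and the discussion of step~(i) can be deleted entirely.
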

\begin{proof}
  This follows from construction of the set $E_2$, since each vertex $\bx$ is the lower endpoint of exactly one edge in $E_A(x)$ for each interval length.
\end{proof}
To summarize, letting $N_1(\cdot)$ and $N_2(\cdot)$ refer to neighborhoods using edges from $E_1$ and $E_2$ respectively, the neighborhood sets $N_1(\bx_i)$ ($i \in [n-1]$) are a collection of independent uniform random (multi)sets of $C$ (of size $\min\{K, n - i\}$). And the edge (multi)sets $N_2(c)$ ($c\in C$) are uniform in the sense of Observation~\ref{symmetry}, and we will show in Claim~\ref{clm:1} that they exhibit some approximate correlation in our favor.
 
 We now show using Hall's theorem that \eqref{match} holds. We first show that $S\subseteq \cX_1$ implies that $|N(S)|\geq |N_1(S)|\geq |S|$ for $|S|\leq n_0:=e^{-4/K}(n-1)$. We do this by finding a perfect matching between $\cX' = \{x_{n- K},\dots, x_{n-1}\}$ and a set $Y \subset C$  and subsequently showing that for every $S$ with $ S \cap \cX' = \emptyset$ and $|S| \leq n_0$, we have $|N_1(S)\setminus Y| \geq |S|$. Finding a matching covering $\cX'$ just requires checking Hall's condition on $\cX'$: each $S \subseteq \cX'$ of size $s$ has at least ${s+1 \choose 2}$ outgoing $E_1$ edges, so the probability there is a set $S\subseteq \cX'$ violating Hall's condition is at most
  \[\sum_{s = 1}^K {K \choose s}{|C| \choose s-1}\left(\frac{s-1}{|C|}\right)^{s+1 \choose 2} \leq 2^K \sum_{s = 2}^K \left(\frac{e|C|}{s-1}\left[\frac{s-1}{|C|}\right]^{(s+1)/2}\right)^{s},\]
  which is $O(1/n)$. Thus Hall's theorem implies the existence of a set $Y \subseteq C$ with a perfect matching to $\cX'$. For $S \subset \cX_1\setminus \cX'$, let $n_1 = n - K - 1$, note
\begin{align*}
\Pr(\exists S\subseteq \cX_1\setminus \cX',|S|\leq n_0,|N_1(S)\setminus Y|\leq |S|)&\leq \sum_{s=K}^{n_0}\binom{n_1}{s}\binom{n-1}{s}\bfrac{s+K}{n-1}^{Ks}\\
&\leq \sum_{s=K}^{n_0}\bfrac{(n-1)e}{s}^{2s}\bfrac{s+K}{n-1}^{Ks}\\
& \leq e^{K^2} \sum_{s=K}^{n_0} \left[e^2 \brac{\frac{s}{n-1}}^{(K-2)}\right]^s\\
& =O(n^{-(K-3)}),
\end{align*}
where in the second-to-last line we used $(1+K/s)^{Ks} \le e^{K^2}.$ 

 To check Halls condition for $S$ with $|S|>n_0$, we look at the sizes of the $E_2$ neighborhoods of subsets of $C$. If $S\subseteq \cX$ satisfies ${|N(S)|}\leq|S|-1$ then for $T=C\sm N(S)$ we have 
\beq{4}{
|N_2(T)|\leq |N(T)|\leq n - 1-|S|\leq n- 1 -|N(S)|-1=|T|- 1.
}
Note also that 
\begin{equation}\label{n2} |T|\leq  n-1-n_0\leq n_2:= 4(n-1)/K.\end{equation}

Let the positions of the points $\cX$ be denoted by $\cP$. Assume now that $\cP$ is given and that $|A|\geq (1-e^{-5B^2})n$. 
\begin{claim}\label{clm:1} 
Let $\cE_c=\cE_c(S)$ be the event that color $c$'s $E_2$-neighbors are in some set $S\subseteq A$. Then for any fixed $S,T,P,$ 
\[
\Pr\brac{\bigcap_{c\in T}\cE_c(S)\;\bigg|\;\cP=P}\leq e^{9|T|}\prod_{c\in T}\Pr(\cE_c(S)\mid\cP=P).
\]
\end{claim}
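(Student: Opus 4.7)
The plan is to decouple the different colors via a Poisson-type comparison. Introduce an auxiliary \emph{independent} model in which, for each edge $e\in E_A$ and each color $c\in T$, we draw independent Bernoulli$(1/|C|)$ variables $\tilde Y_{e,c}$ and form pseudo-color sets $\tilde Z_c=\{e:\tilde Y_{e,c}=1\}$. Let $\tilde{\cE}_c(S)$ be the event that the $K$ lowest-level edges of $\tilde Z_c$ (using the same tie-breaking as in the construction of $E_2$) all have lower endpoint in $S$. Two observations follow: first, marginally $\tilde Z_c$ has the same law as the true color-$c$ set $Z_c$ under the multinomial coloring (both are i.i.d. Bernoulli$(1/|C|)$ subsets of $E_A$), so $\Pr(\tilde{\cE}_c(S)\mid\cP=P)=\Pr(\cE_c(S)\mid\cP=P)$; second, the $\tilde Z_c$ are mutually independent across $c\in T$, so
\[
\prod_{c\in T}\Pr(\cE_c(S)\mid\cP=P)=\Pr_{\mathrm{ind}}\Bigl(\bigcap_{c\in T}\tilde{\cE}_c(S)\;\Big|\;\cP=P\Bigr).
\]
It thus suffices to show $\Pr(\bigcap_{c\in T}\cE_c\mid\cP=P)\le e^{9|T|}\Pr_{\mathrm{ind}}(\bigcap_{c\in T}\tilde{\cE}_c\mid\cP=P)$.

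To compare the two joint probabilities, I would enumerate over ``winning configurations'' $(H_c)_{c\in T}$, where each $H_c\subseteq E_A^S:=\{e\in E_A:\text{lower endpoint of }e\in S\}$ is a $K$-element subset, the $H_c$ are pairwise disjoint, and $H_c$ plays the role of the $K$ lowest-level color-$c$ edges; write $j_c^*=\max_{e\in H_c}\mathrm{level}(e)$. In both models the probability that a fixed $(H_c)$ is the winning configuration factors as $(1/|C|)^{K|T|}$ (for the in-set conditions) times the probability that no edge $e\notin H_c$ with $\mathrm{level}(e)<j_c^*$ is colored $c$. In the multinomial model these exclusions couple across colors: for each $e\notin\bigcup_c H_c$ we require $c(e)\notin T_e:=\{c\in T:\mathrm{level}(e)<j_c^*\}$, contributing the factor $1-|T_e|/|C|$. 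In the independent model, the constraints factor across $(e,c)$ pairs, contributing $(1-1/|C|)$ per constrained pair.

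Comparing these expressions, Bernoulli's inequality $(1-1/|C|)^{|T_e|}\ge 1-|T_e|/|C|$ shows that each edge $e\notin\bigcup_c H_c$ contributes a factor $\le 1$ to the ratio $\Pr_{\mathrm{mult}}(\text{config})/\Pr_{\mathrm{ind}}(\text{config})$. The only remaining contribution comes from edges $e\in H_{c^*}\subseteq\bigcup_c H_c$, each of which contributes a factor $(1-1/|C|)^{-|T_e^H|}$, where $|T_e^H|$ counts $c'\in T\setminus\{c^*\}$ with $\mathrm{level}(e)<j_{c'}^*$. Bounding crudely $\sum_{e\in\bigcup_c H_c}|T_e^H|\le K|T|(|T|-1)$ (there are $K|T|$ edges in $\bigcup_c H_c$, each contributing at most $|T|-1$) and combining with the given bound $|T|\le n_2=4(n-1)/K$ from \eqref{n2} gives
\[
\ln\frac{\Pr_{\mathrm{mult}}(\text{config})}{\Pr_{\mathrm{ind}}(\text{config})}\le\frac{K|T|(|T|-1)}{|C|-1}\le\frac{4|T|(n-1)}{n-2}\le 9|T|
\]
for $n$ sufficiently large. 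Since this bound holds uniformly over all winning configurations, summing over them yields the claim. The main obstacle is the careful handling of tie-breaking within each level, which we resolve by assigning each $(e,c)$ pair an independent uniform tie-break label in $[0,1]$, so that effective levels are almost surely distinct and the configuration $(H_c)$ is uniquely determined by the random draw; the configuration analysis above then proceeds cleanly.
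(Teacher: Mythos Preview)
Your argument is correct and takes a genuinely different route from the paper's proof. The paper proceeds sequentially via the chain rule, writing $\Pr(\bigcap_c \cE_c)=\prod_j \Pr(\cE_{c_j}\mid\bigcap_{k<j}\cE_{c_k})$, and bounds each conditional factor by further conditioning on the full ``history'' $\cH$ of colors $c_1,\dots,c_{j-1}$ (i.e., which $K$ edges received each of those colors and which edges are consequently forbidden from them); for a fixed $\cH$ it shows $\Pr(\cE_{c_j}\mid\cH)\le e^9\Pr(\cE_{c_j})$ by observing that conditioning on $\cH$ perturbs the probability that any given edge receives color $c_j$ by at most a factor $e^{4.5/K}$ (from at most $|T|\le n_2$ excluded colors per edge) and removes at most $K|T|\le Kn_2$ edges from consideration entirely. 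Your approach instead treats all colors symmetrically by introducing an independent-Bernoulli auxiliary model and comparing the two joint probabilities configuration-by-configuration via $(1-1/|C|)^{|T_e|}\ge 1-|T_e|/|C|$; this ``Poissonization'' isolates the overhead to the $K|T|$ edges in $\bigcup_c H_c$, and as a byproduct gives a slightly sharper constant ($\approx 4$ rather than $9$) in the exponent. The paper's sequential argument is a bit more robust in that it handles colors appearing fewer than $K$ times in $E_A$ (the ``$\le K$'' in its definition of histories) without modification, whereas your configuration sum tacitly assumes each $|H_c|=K$; this is negligible here since $|E_A|\approx n\log^2 n\gg |C|$, but worth flagging.
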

\begin{proof}
  If $T=\set{c_1,c_2,\ldots,c_\tau}$ then
\[
\Pr\brac{\bigcap_{c\in T}\cE_c \bigg| \cP=P}=\prod_{j=1}^\tau\Pr\brac{\cE_{c_j}\ \bigg| \bigcap_{k<j}\cE_{c_{k}},\cP=P}.
\]
Now write
\[
\Pr\brac{\cE_{c_j}\ \bigg| \bigcap_{k<j}\cE_{c_{k}},\cP=P}=\sum_{\cH\in H}\Pr(\cE_{c_j}\mid\cH,\cP=P)\Pr\brac{\cH\mid \cap_{k < j}\cE_{c_k},\cP=P},
\]
where each \emph{history} $\cH\in H$ is an event (defined following \eqref{PH}) consisting of the choices for the $\leq K$ lowest level edges in each of the colors $c_1,\dots,c_{j-1}$ (under each color's collection of random orderings of edges of each fixed level). 

To prove our claim it suffices to show that 
%for a $1-o(1)$ measure of the choices for $P\in \Omega_P$ and
for all choices of the event $\cH\in H$,
\beq{PH}{
\Pr(\cE_{c_j}\mid \cH,\cP=P)\leq e^9\Pr(\cE_{c_j}\mid\cP=P)
}
 For this, we write $\cH = \cap_{i < j} \cH_i$, where $\cH_i$ is color $i$'s random orderings of the edges of each fixed level as well as its first $K$ edges in $E_A$ under this ordering (which are all edges with lower endpoint in $S$). Note that each $\cH_i$ introduces a set $G_i$ of $K$ edges in $E_A$ which are colored by $c_i$ and an additional set $F_i$ of (usually linearly many) edges that may not receive the color $c_i$. We now let $L=(\bx_i,\by_i),i=1,2,\ldots,|{E_A}|$ be a listing of the edges in $E_A$ in increasing order of level, where edges of the same level are ordered uniformly at random, and we consider some history $\cH_j$ that determines the event $\cE_{c_j}$.

The probability $\Pr(\cH_j\mid \cH,\cP=P)$ is determined by the outcome of an experiment $\Lambda_\cH=\Lambda_\cH(P)$ (the \emph{conditioned experiment}) in which we consider the edges in $L$ one at a time and choose its color according to the constraints imposed by the event $\cH$: when we reach the $t^{th}$ edge, if $t \in G_i$ for some $i < j$, the edge is colored by $c_i$, and otherwise the $t^{th}$ edge is uniform over $C\sm I_t$, where $I_t:=\{c_i : t \in F_i\}$.  We compare this conditioned experiment to the \emph{auxiliary experiment} $\Lambda=\Lambda(P)$ in which we simply randomly assign random colors from $C$ to edges in $E_A$; note that 
\[\sum_{\cH_j} \Pr(\cH_j\mid \cP = P) = \Pr(\cE_{c_j}\mid\cP=P),\]
so for \eqref{PH}, it suffices to show
\begin{equation}\label{ptwise}\Pr(\cH_j\mid \cH,\cP=P) \leq e^{9}\Pr(\cH_j\mid \cP=P)\end{equation}
for every $\cH_j$.

Defining $p = (n-1)^{-1}$, we have 
\[\Pr(\cH_j\mid \cP=P) =  p^{K}(1 - p)^{|F_j|}.\]
We similarly calculate $\Pr(\cH_j\mid \cH,\cP=P)$, the probability in the conditioned experiment, but now we have parameters $a_t = |I_t|$ and $p_t$, the probability that the $t$th edge receives color $c_j$ in the conditioned experiment. The important fact is that $p_t = 0$ if $t \in \cup_{i < j} G_i$ and otherwise
\[p_t = (n - 1 - a_t)^{-1} \leq (n - 1 - n_2)^{-1} < e^{4.5/K} p\]
where the last inequality uses our definition of $n_2$ in \eqref{n2} and the fact that $K$ is large.
We calculate 
\[\Pr(\cH_j\mid \cH,\cP=P) = \prod_{t \in G_j} p_t \prod_{t \in F_j} (1 - p_t)\leq e^{4.5}p^K\prod_{t \in F_j} (1 - p_t).\]
We finish the proof by noting that $1 - p_t \leq 1- p$ if $t \not\in \cup_{i < j} G_j$, and if $t \in \cup_{i < j} G_j$ then $1 - p_t = 1$ on this set of size $(j-1)K < n_2K$, so
\[\prod_{t \in F_j} (1 - p_t)\leq (1 - p)^{|F_j|}(1 - p)^{-n_2K} < (1 - p)^{|F_j|}e^{4.5}\]
for large enough $K$ and $n$.

This implies \eqref{ptwise} and completes the proof of Claim \ref{clm:1}.% We justify equation \eqref{prod} as follows: the failure of the event $\cE_{c_j}$ in experiment $X_\cH$ can be caused only by the $c_j$-coloring of an edge $e_i, i\in I_1\sm I_\cH$ such that $\bx_i\notin S$. Suppose we first choose a color for $X_\cH$ and then choose a color for $X$. In this case, the event $\cD_i$ does not affect the event $\cE_{c_j}$ in $X_\cH$.
\end{proof}

\begin{claim}\label{clm:3}
    If the event $\cG$ holds (where $\cG$ is defined above \eqref{negG}), then for any fixed $S \subseteq A$ of size $s$, (under the probability of choosing random colors for each edge)
\[
 \Pr(\cE_c(S)\mid A) \leq \bfrac{s}{|A|}^{\sqrt{K}} + O(n^{-\sqrt{K} + 2}\log^{8\sqrt{K}} n).
\]
 \end{claim}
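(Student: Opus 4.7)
I would bound $\Pr(\cE_c(S)\mid A)$ by a direct level-by-level computation, exploiting the iid structure of the coloring and a within-level symmetry. Fix the point configuration $\cP$ (so $A$ and $E_A$ are determined and $\cG$ is assumed). The remaining randomness consists of the iid uniform colorings from $C$ on edges of $K_n$---which, since the $|A|L$ edges of $E_A$ are pairwise distinct, induces iid uniform colors on $E_A$---together with the independent uniform random tie-break orderings $\sigma_{c,j}$.

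Let $R_j$ be the number of color-$c$ edges at level $j$ in $E_A$ and $R=\sum_jR_j$; then the $R_j$ are independent $\mathrm{Binomial}(|A|,p)$ with $p=1/(n-1)$. The key within-level symmetry is that conditional on $R_j=r$ the color-$c$ edges at level $j$ form a uniform random $r$-subset of the $|A|$ level-$j$ edges, and combined with the uniform tie-break ordering, the first $k$ color-$c$ edges at level $j$ form a uniform random $k$-subset of the level-$j$ edges whenever $k\le R_j$.

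Next I would compute $\Pr(\cE_c(S)\mid (R_j))$. If $R\ge K$, write $J$ for the smallest $j$ with $\sum_{i\le j}R_i\ge K$ and $k'=K-\sum_{i<J}R_i$; then $\cE_c(S)$ requires the $R_j$ color-$c$ edges at each level $j<J$ and the first $k'$ at level $J$ to have lower endpoint in $S$. By the symmetry observation and independence across levels,
\[
\Pr(\cE_c(S)\mid (R_j))\le\prod_{j<J}\frac{\binom{s}{R_j}}{\binom{|A|}{R_j}}\cdot\frac{\binom{s}{k'}}{\binom{|A|}{k'}}\le\bfrac{s}{|A|}^{K}\le\bfrac{s}{|A|}^{\sqrt K},
\]
using $\binom{s}{r}/\binom{|A|}{r}\le(s/|A|)^r$ (valid since $s\le|A|$). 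When $R<K$, the event requires all $R$ color-$c$ edges to lie in $S$, with conditional probability at most $(s/|A|)^R$.

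Averaging over $(R_j)$ and splitting according to whether $R\ge\sqrt K$: the contribution from $R\ge\sqrt K$ is at most $O\bigl((s/|A|)^{\sqrt K}\bigr)$, and the contribution from $R<\sqrt K$ is at most $\Pr(R<\sqrt K)$. Since $R\sim\mathrm{Binomial}(|A|L,p)$ with $\E R=\Theta(L)=\Theta(\log^2 n)\gg\sqrt K$ for constant $K$, a Chernoff estimate gives $\Pr(R<\sqrt K)$ super-polynomially small, easily absorbed into the claimed error $O(n^{-\sqrt K+2}\log^{8\sqrt K} n)$. The main conceptual subtlety is justifying the within-level symmetry, which uses the exchangeability of colors across edges of $E_A$ together with the independence of the tie-break orderings across colors and levels; the hypothesis $\cG$ does not appear essentially in this sketch, but likely the authors invoke it to control the error term via a coarser union bound in which the bound $\nu_\bx<15B^2\log^8 n$ contributes the $\log^{8\sqrt K} n$ factor.
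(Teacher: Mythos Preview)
Your argument is correct and in fact somewhat sharper than needed. The paper takes a different route. Rather than conditioning on the level profile $(R_j)$ and using within-level exchangeability as you do, the paper introduces an auxiliary event $\cA$ asserting that for every $\bx\in A$ each color appears fewer than $\sqrt{K}$ times on short edges incident to $\bx$; under $\cG$ the bound $\nu_\bx<15B^2\log^8 n$ yields $\Pr(\neg\cA)=O(n^{-\sqrt K+2}\log^{8\sqrt K}n)$ via a union bound over colors and $\sqrt K$-tuples, which is exactly the error term in the claim. On $\cA$ the $K$ lowest-level color-$c$ edges must have at least $\sqrt K$ distinct lower endpoints, and then the paper invokes the global symmetry of Observation~\ref{symmetry} (invariance of the $E_2$-neighborhood under permutations of $A$) to get $\binom{s}{\sqrt K}/\binom{|A|}{\sqrt K}$.

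Your level-by-level decomposition bypasses both the detour through $\cA$ and the appeal to global symmetry: once one checks (as you implicitly do, and as holds because distinct $(\bx,j)$ pairs yield distinct undirected edges) that the $|A|L$ edges of $E_A$ carry iid colors, the independence across levels and uniformity within each level give the bound directly. As you correctly observe, $\cG$ plays no essential role in your argument---the deficient case $R<\sqrt K$ already has probability $e^{-\Omega(\log^2 n)}$ by Chernoff, stronger than the stated polynomial error. The paper's approach has the mild advantage of fitting neatly with Observation~\ref{symmetry}, which is used again in Claim~\ref{clm:1}, but your argument is more self-contained and yields the tighter exponent $K$ (rather than $\sqrt K$) on the main term whenever $R\ge K$.
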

 \begin{proof}
    Fix $\bx\in A$ and recall the definition of $\nu_\bx$ above \eqref{negG}. Let $\cA$ be the event that for each $\bx \in A$, each color appears fewer than $\sqrt{K}$ times on edges of length at most $2Bn^{-1/2}\log^4 n$. The probability of this event failing for fixed $\bx \in A$ is at most
    \[|C| {\nu_\bx \choose \sqrt{K}}|C|^{-\sqrt{K}},\]
    and taking a union bound over $A$ and using \eqref{negG} gives the second bound in the claim. 
    
    So we may assume that $\cA$ holds. Since all edges of $E_2$ have length at most $2Bn^{-1/2}\log^4 n$, this implies that each $c \in C$ has at least $\sqrt{K}$ neighbors in $E_2$.
   As the distribution of the (multi)-set of neighbors along $E_2$ edges of $c$ is symmetric with respect to permutation of the vertex set (Observation \ref{symmetry}), we have that
   \[
   \Pr(\cE_c(S)\mid A, \cA) \leq \frac{\binom{s}{\sqrt{K}}}{\binom{|A|}{\sqrt{K}}}\leq  \bfrac{s}{|A|}^{\sqrt{K}}. 
   \]
   %We fix a set $S$ with $|S| = s$. Then the probability that $\cE_c(S)$ occurs is at most the sum over $j\in[L]$ of the probability that $\cE_c(S)$ occurs and that the color $c$'s $K$th largest edge has length between $C(j-1)^2/\sqrt{n}$ and $Cj^2/\sqrt{n}$, which happens with probability at most
%\beq{jsK}{
%\binom{js}{K}n^{-K}\brac{1 - \frac{1}n}^{(j-1)m- K} \leq \bfrac{ejs}{Kn}^Ke^{-(j-1)/2}.
%}
%where $m=|A|$.\\
%{\bf Explanation:} We need to choose the $K$ edges incident to $S$ from a set of size $js$ in which color $c$ appears, and the assumption that the $K$th largest $E_2$ edge of color $c$ has weight in $C[(j-1)^2,j^2]/\sqrt{n}$ means that $c$ does not appear elsewhere in the first $(j-1)m$ edges of $E_2$.

%Summing over $j \geq 1$, we have that this probability is at most
%\begin{align*}
%\bfrac{es}{Kn}^K\sum_{j\geq1}j^K e^{-(j-1)/2} &\leq \bfrac{es}{Kn}^K\int_{x\geq 1}x^Ke^{-(x-1)/2}dx\\
%&=2e^{1/2}\bfrac{2es}{Kn}^K\int_{y\geq 0}y^Ke^{-y}dy\\
%&=2e^{1/2}K!\bfrac{2es}{Kn}^K\leq \bfrac{es}{n}^K.
%\end{align*}
 \end{proof}

Note that by \eqref{Asize}, if $K$ is large, then the bound in Claim~\ref{clm:3} implies
\[\Pr(\cE_c(S)\mid A) \leq \bfrac{2s}{n}^{\sqrt{K/2}}.\]
We can now finish the proof that $\G$ has a perfect matching. Recalling \eqref{4}, we just need to show that $|N_2(T)| \geq |T|$ for every $T \subseteq C$ with $|T| \leq n_2$. But the probability of this event is at most
\[
\Pr(\exists S\subseteq A,T\subseteq C,|S|=|T|=t: N_2(T)\subseteq S\mid\cP=P)\leq \binom{n}{t}^2\bfrac{2e^{9}t}{n}^{t\sqrt{K/2}}.
\]
Summing over $t$ (and recalling $n_2 = 4(n-1)/K$ for a large constant $K$), we get
\[
\Pr(\exists S,T,|S|=|T| \le n_2: N_2(T)\subseteq S)\leq o(1)+\sum_{t=1}^{n_2}\binom{n}{t}^2\bfrac{2e^9t}{n}^{t\sqrt{K/2}}=o(1).
\]
This completes the proof that $\G$ has a perfect matching $M$ w.h.p. This matching $M$ gives us an acyclic rainbow set of $n-1$ edges and so defines a rainbow spanning tree. 

We now crudely bound the cost of $M$ by the cost of all the edges in $\G$. We begin with the cost of $E_2$. We show that under our assumption that $|A| > n/2$ (see \eqref{Asize}) the expected cost of the edges in $E_2$ is $O(n^{1/2})$. Moreover, given $A$, the cost of edges from $E_2$ is an $O(n^{-1/2}\log^4 n)$-Lipschitz function of the colors of $E_A$ (the Lipschitz constant coming from our definition of $E_A$), so we can convert the bound on the expectation to a probability bound using Warnke's inequality, Lemma~\ref{warnke}. To bound the expected cost given $A$, we fix a color $c$ and define $Y$ as the index of the $K$th lowest index edge of color $c$ in $E_A$. Since edges of index $j$ have weight $O(j^2 n^{-1/2})$, it suffices to show that
\begin{equation}\label{Yj}\Pr(Y > j) < \exp[-\Omega(j)].\end{equation}
The event $Y > j$ means that color $c$ appeared fewer than $K$ times among the edges of $E_A$ with index at most $j$. And under our assumption that $|A| > n/2$, this is at most the probability that $X < K$ where $X$ is distributed as $\text{Binomial}(jn/2,1/n)$. If $j> 4K$, the Chernoff bounds immediately give \eqref{Yj} (and for smaller $j$, we may take the implicit constant in \eqref{Yj} small enough to make the inequality trivial), completing the proof.

We now turn to the cost of the edges in $E_{1}$. First, if $i > n - 2n^{-1/2}$, then we use the trivial bound that the $K$ shortest edges incident to $\bx_i$ each have length at most $1$; so the total length of the $E_1$ edges incident to $\bx_i$ with $i > n - 2n^{-1/2}$ is at most $2Kn^{1/2}$. For smaller $i$, we define $Z = Z_i$ as the expected value of the $K$th shortest edge incident to $\bx_i$ and show
\[\E Z = O(n^{-1/2}).\]
(As in the previous case of $E_2$ edges, we can sum this bound over $i \leq n - 2n^{1/2}$ to get a bound on the expected cost of $E_1$ edges incident to $\bx_i$'s with $i \leq n - 2n^{1/2}$, and this expectation bound can be converted to a probability bound using Lemma~\ref{warnke}.)
For this bound, we show that for large enough $D_0$, if $D_0 < D < n^{1/2}$, then
\begin{equation}\label{PZ}\Pr(Z > Dn^{-1/2}) < \exp[-D/8].\end{equation}
We prove this bound for $i = n - 2n^{1/2}$ noting that the argument is the same or easier for smaller $i$. 
First, Chernoff bounds imply that $\xi_2(\bx_i) \in [1 - 4n^{-1/2}, 1 - n^{-1/2}]$ with probability at most $\exp[-n^{1/2}/4]$. Now, suppose $\xi_2(\bx_i) = y \in [1 - 4n^{-1/2}, 1 - n^{-1/2}]$ and (without loss of generality) that $x:=\xi_1(\bx_i) \leq 1/2$. Then the event $Z > Dn^{-1/2}$ is contained (for moderately large $D$) in the event that the rectangle $[x, x + Dn^{-1/2}/2]\times [y,1]$ has at most $K-1$ points from the set $\bx_{i+1}, \dots, \bx_n$. Since the locations of these points are uniform in the rectangle $[0,1] \times [y,1]$ (under our condition that $\xi_2(\bx_i) = y$), the probability of this event is exactly $\Pr(X < K)$ where $X$ is distributed as $\text{Binomial}(2n^{1/2},Dn^{-1/2}/2)$. Noting that $\E X = D$, the Chernoff bounds immediately imply \eqref{PZ} for large enough $D$, completing the proof. \qed

\section{Proof of Theorem \ref{EucTours}(a)}\label{sec:tours}

We start by revealing the positions of the vertices and setting aside a set $R$ of $r$ arbitrary vertices where $r=Cn^{1/2}$ for some constant $C$ large enough to make the following argument work. Set $N' = [n] \sm R$ and $n' =n - r$. We then run the following greedy algorithm on $N'$ for $k_0:=n'-r$ steps: we begin with $A=\emptyset$ and let $G_A=(N',A)$ be the graph induced by the selected edges $A$ (we will maintain that $G_A$ is a disjoint set of paths). We consider the natural greedy algorithm which iteratively adds the cheapest edge to $A$ that (i) does not create a cycle in $G_A$, (ii) does not create a vertex of degree 3 in $G_A$, and (iii) does not repeat a color. The algorithm is implemented as follows: we start with all edges available and the edges naturally ordered by length; at stage $i+1$, we choose the cheapest available edge and reveal its color $c$; if this color appears already in $G_A$, then we move to the next cheapest available edge and repeat the process until the revealed color does not appear in $G_A$; we add this edge to $G_A$; we then remove from the available edges any edge whose addition to $A$ would violate condition (i) or (ii) above.
%Theorem \ref{th1}(a) follows from the following:
% \begin{lemma}
% W.h.p. this greedy algorithm creates a tour of cost at most $O(n^{1/2})$.
% \end{lemma}

After $k$ iterations, say there are $m$ nontrivial paths of lengths $\ell_1,\ell_2,\ldots,\ell_m\geq1$ where $\ell_1+\cdots+\ell_m=k$. Let $\bx_1,\ldots,\bx_{m}$ be made up from one endpoint of each path and let $\bx_{m+1},\ldots, \bx_{n'-k}$ be the isolated vertices (not in any edge of $A$). Let $X_k=\{\bx_1,\ldots,\bx_{n'-k}\}$.

We claim that with failure probability $o(1/n)$, there is an edge of $X_k$ of length at most $\l_k=Cn^{1/2}/(n'-k)$ whose color does not appear in $A$. To see this, partition $[0,1]^2$ into $m_k=2/\l_k^2$ sub-squares of side $\l_k/2^{1/2}$, and let $S$ be an arbitrary set of points of size $n' - k$. Let $s_i$ denote the number of points of $S$ in square $i$. Then, by convexity,
\[
\sum_{i=1}^{m_k}\binom{s_i}{2}\geq m_k\binom{|X_k|/m_k}{2}\geq C^2n/5.
\]
Then, the probability that there is a set $S$ of $n'-k$ points (from $N'$) and a set $Q$ of $q-k$ colors (from our palette of size $q:=(1 + \ep)n$) such that no sub-square contains a pair of points joined by an edge with color in $Q$ can be bounded by
\begin{align*}
\binom{n'}{n' - k}\binom{q}{q-k}\bfrac{k}{q}^{C^2n/5}&= \binom{n'}{k}\binom{q}{k}\bfrac{k}{q}^{C^2n/5}\\
&\leq \brac{\frac{ne}{k}\cdot\frac{qe}{k}\cdot\bfrac{k}{q}^{C^2n/(5k)}}^k\\
&\leq \brac{e^2(1 + \e)\bfrac{1}{1 + \e}^{C^2n/(5k)-2}}^k=o(n^{-1}),
\end{align*}
for sufficiently large $C=C(\e)$.

At the end of the greedy algorithm, there are exactly $r$ components of $G_A$ since at each step the number of components drops by one. We claim that we can complete $G_A$ to a rainbow Hamilton cycle using edges in a graph $\nabla(N',R)$, the edges with one end in $N'$ and one end in $R$, noting that the colors of all edges in $\nabla(N',R)$ remain unrevealed. Use $C_1$ for the set of colors appearing in $G_A$.

The only edges in $\nabla(N',R)$ that are relevant for completing $G_A$ to a Hamilton cycle are those incident to an endpoint of a (possibly trivial) path in $G_A$. Supposing that there are $p$ nontrivial paths in $G_A$, we think of the relevant edges of $\nabla(N',R)$ as a partially directed bipartite graph $B = (X,Y;E)$ where $|X| = |Y| = |R|$. $X$ is the union of $P$, one vertex for each non-trivial path, and $I$, one vertex for each isolated vertex. $Y$ is a copy of $R$ and $E$ is the union of the complete bipartite graph on $I \cup Y$ together with the complete directed graph on $P \cup Y$. Observe that a (partially directed) rainbow Hamilton cycle on $B$ using colors from $C \sm C_1$ together with the edges from $G_A$ forms a rainbow Hamilton cycle on $[n]$. Moreover, since $|R| \approx \sqrt{n}$, the cost of any Hamilton cycle on $B$ is $O(\sqrt{n})$. So to finish the proof, it suffices to find a rainbow Hamilton cycle on $B$ using $C \sm C_1$.

Reveal the colors on $B$, noting that an edge receives a color from $C \sm C_1$ with probability $p_1=\e/(1+\e)$. So let the edges of $\G_1=B_{p_1}$ be randomly colored using a palette of $q'\geq \e n$ colors. Note next that the expected number of times a particular color is used is at most $\l:=2r^2/q'=O(1)$. It is not difficult then to show using McDiarmid's inequality \cite{McD} that the set of colors $C_2$ used exactly once satisfies $|C_1|\gtrsim q'e^{-\l}$ with failure probability $o(1/n)$. Now consider the subgraph $\G_2$ consisting of edges in $\G_1$ whose colors are in $C_2$. Conditioning on $|\G_2|=m$, $\G_2$ will be distributed as (a supergraph of) $B_m$, a random set of $m$ edges from $B$. All we need to prove now is the following:
\begin{proposition}
With probability $1 - o(1/n)$, $\G_2$ contains a Hamilton cycle.
\end{proposition}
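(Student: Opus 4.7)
The plan is to show that $\G_2$ behaves like a dense uniform random subgraph of $B$, so that it contains a Hamilton cycle except with failure probability $\exp(-\Omega(r)) = \exp(-\Omega(\sqrt n)) = o(1/n)$. The argument has three parts: a lower bound on $|\G_2|$, an exchangeability observation, and a quantitative Hamiltonicity estimate.

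First, since each color in $C_2$ is used on exactly one edge of $\G_1$, we have $|\G_2| = |C_2|$. The McDiarmid-based argument in the paragraph preceding the proposition already gives $|C_2| \geq \frac{1}{2} q' e^{-\l} = \Omega(n) = \Omega(r^2)$ with failure probability $o(1/n)$. Since $|E(B)| = \Theta(r^2)$ trivially, $|\G_2|$ is a constant fraction of $|E(B)|$ with the same failure probability.

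Second, the entire random process defining $\G_2$ -- inclusion in $\G_1 = B_{p_1}$, uniform coloring of $\G_1$-edges from $C \sm C_1$, and restriction to uniquely-colored edges -- is invariant under any permutation of $E(B)$, because each step is symmetric across edges. Hence, conditional on $|\G_2| = m$, the set $\G_2$ is uniformly distributed over $m$-element subsets of $E(B)$. This reduces the claim to showing that a uniformly random $\Theta(r^2)$-edge subgraph of $B$ contains a Hamilton cycle with failure probability $\exp(-\Omega(r))$.

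Third, for such a uniformly random bipartite subgraph with $r$ vertices on each side and constant edge density, a Chernoff bound gives that every vertex has degree $\Omega(r)$ with failure probability $r\cdot\exp(-\Omega(r)) = \exp(-\Omega(r))$, and then a standard random bipartite Hamiltonicity argument (e.g.\ P\'osa rotation-extension) produces a Hamilton cycle with the same failure probability. Since $r = \Theta(\sqrt n)$, this yields $\exp(-\Omega(\sqrt n)) = o(1/n)$, as required. The main obstacle I expect is handling the mixed directed/undirected structure of $B$: the directed edges between $P$ and $Y$ encode which endpoint of a non-trivial path is used at each junction of the Hamilton cycle, while edges between $I$ and $Y$ are undirected. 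A clean reduction is to double each $P$-vertex into two copies, one per path endpoint, producing a purely undirected bipartite graph $B'$ in which Hamilton cycles of $B$ correspond to Hamilton cycles of $B'$ compatible with the ``endpoint-pair'' matching; the density/rotation argument then goes through with only small adjustments to the constants.
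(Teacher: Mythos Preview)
Your first two steps (the size lower bound on $|\G_2|$ and the exchangeability reduction to a uniformly random $m$-edge subgraph of $B$) are correct and are exactly what the paper uses, though the paper phrases the second step as a coupling of $\G_2$ with $B_{p_2}$ for $p_2\sim e^{-\l}q'/|E(B)|$.

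The difference is in how the partially directed structure of $B$ is handled. The paper does not attempt a P\'osa-type argument directly on $B$. Instead it invokes McDiarmid's coupling \cite{Mc80}: the event ``$B_{p_2}$ contains a (partially directed) Hamilton cycle'' stochastically dominates the event ``$K_{r,r,p_2}$ contains an (undirected) Hamilton cycle'', because replacing an undirected $\{x,y\}$-edge (one Bernoulli$(p_2)$ trial) by the pair of arcs $x\to y$, $y\to x$ (two independent Bernoulli$(p_2)$ trials) can only increase the probability that some Hamilton cycle survives. One then cites the bipartite Hamiltonicity result of Frieze \cite{FriezeHam} for $K_{r,r,p_2}$ with $p_2$ a constant, which gives failure probability $o(1/n)$.

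Your proposed handling of the directed edges has a genuine gap. Doubling each $P$-vertex produces a bipartite graph $B'$ with $|I|+2|P|=r+|P|$ vertices on one side and $r$ on the other; since $|P|>0$ in general, $B'$ is unbalanced and has \emph{no} Hamilton cycle at all, so ``Hamilton cycles of $B$ correspond to Hamilton cycles of $B'$ compatible with the endpoint-pair matching'' cannot be made literal. What you actually need in $B'$ is a $2$-factor in which the two copies of every $P$-vertex lie in the same component and each copy has degree exactly $1$ toward $Y$; this is a rather different (and more awkward) structure to find by rotation--extension, and the ``small adjustments to the constants'' you anticipate would in fact be a substantial rewrite. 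McDiarmid's coupling sidesteps all of this: it reduces the mixed directed/undirected problem to a purely undirected one in a single line, which is why the paper's proof is so short.
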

\begin{proof}
This follows from a result of Frieze \cite{FriezeHam} if $B$ contains no directed edges. Moreover, $\G_2$ can be coupled with $B_{p_2}$, the binomial random subgraph of $B$ with $p_2 \sim e^{-\l}q'/n$ with failure probability $o(1/n)$. To finish, note that McDiarmid's coupling \cite{Mc80} shows that $B_{p_2}$ is more likely to contain a Hamilton cycle than $K_{r,r,p_2}$ (which contains a Hamilton cycle with probability $1 - o(1/n)$ by \cite{FriezeHam}). 
\end{proof}
This completes the proof of Theorem \ref{EucTours}.

\section{Separating $Z_{\MST}$ from $Z_{\MST}^*$ and $Z_{\TSP}$ from $Z_{TSP}^*$ -- Theorem \ref{EucTours}(b)}\label{sec:separate}
We can show the separations
\begin{align}
  \label{zmsts}Z_{\MST}-Z_{\MST}^*=\Omega(\sqrt n)\quad\text{w.h.p}\\
  \label{ztsps}Z_{\TSP}-Z_{\TSP}^*=\Omega(\sqrt n)\quad\text{w.h.p}.
\end{align}
using the methods of \cite{FP1} (which there were used to show, among other things, that $Z_{TSP}^*-Z_{MST}^*=\Omega(\sqrt n)$).

We begin with a simple lemma.
\begin{lemma}\label{repeat}
Let $S$ be a set of size $\a=\Theta(n)$ that is randomly colored using $\b=\Theta(n)$ colors. Then w.h.p. there are $\Theta(n)$ colors that are used more than once.
\end{lemma}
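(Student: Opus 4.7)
The plan is to view the coloring as throwing $\a=\Theta(n)$ balls independently and uniformly into $\b=\Theta(n)$ bins, and to show that the number $X$ of bins that receive at least two balls is $\Theta(n)$ w.h.p., by combining a first-moment computation with a bounded-differences concentration inequality.

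First I would write $X=\sum_{c\in[\b]}X_c$, where $X_c$ is the indicator that color $c$ is assigned to at least two elements of $S$. A direct calculation gives
\[
\E X_c = 1 - \left(1-\tfrac{1}{\b}\right)^{\a} - \tfrac{\a}{\b}\left(1-\tfrac{1}{\b}\right)^{\a-1}.
\]
Since $\a/\b=\Theta(1)$, each of the two subtracted terms is bounded away from $1$ and bounded away from $0$ (using $(1-1/\b)^{\a}\to e^{-\a/\b}$), so $\E X_c = \Theta(1)$ uniformly in $c$ and hence $\E X = \Theta(n)$.

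For concentration I would apply McDiarmid's bounded differences inequality to $X$ viewed as a function of the $\a$ independent color assignments. Changing the color of a single element of $S$ can only alter, at most, the status of the color removed and the color added, so $X$ changes by at most $2$. McDiarmid then yields
\[
\Pr\!\left(|X-\E X|\geq t\right)\leq 2\exp\!\left(-\tfrac{t^2}{2\a}\right),
\]
and choosing $t=\tfrac{1}{2}\E X = \Theta(n)$ shows that $X = \Theta(n)$ with failure probability $\exp(-\Omega(n))$, which is w.h.p. as required.

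There is no real obstacle here; the argument is a routine first-moment plus bounded-differences concentration step. The only thing to be mildly careful about is keeping the constants hidden in the $\Theta$'s uniform in the ratio $\a/\b$, which is automatic once one checks that the relevant functions of $\a/\b$ are bounded on any compact subset of $(0,\infty)$. If one preferred to avoid McDiarmid, a second-moment computation on the $X_c$'s (whose pairwise covariances are $O(1/\b)$, giving $\Var X = O(n)$) followed by Chebyshev would also suffice.
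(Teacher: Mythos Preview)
Your proposal is correct and essentially identical to the paper's proof: both compute $\E X=\b\bigl(1-(1-1/\b)^\a-(\a/\b)(1-1/\b)^{\a-1}\bigr)=\Theta(n)$ and then apply McDiarmid's inequality with respect to the $\a$ independent color choices. The only cosmetic difference is that the paper observes the Lipschitz constant is actually $1$ (the removed color can only lose ``repeated'' status while the added color can only gain it, so the two changes have opposite signs), whereas you use the looser bound $2$; this affects only the constant in the $e^{-\Omega(n)}$ exponent.
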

\begin{proof}
Let $Z$ denote the number of colors that appear more than once. Then 
\[
\E(Z)=\b\brac{1-\brac{1-\frac{1}{\b}}^{\a}-\a \cdot \frac{1}{\b}\brac{1-\frac{1}{\b}}^{\a-1}}=\Theta(n).
\]
Now changing the color of one edge changes $Z$ by at most one. So, applying McDiarmid's inequality \cite{McD} we have 
\[
\Pr\brac{Z\leq \E(Z)/2}\leq \exp\set{-\frac{\E(Z)^2}{2\a}}=e^{-\Omega(n)}.
\]
\end{proof}
Now back to the main argument. For notational convenience, in this section we scale $\cX$ so that we are instead working with a set $\cY_n$ of $n$ random points in a square of side length $\sqrt{n}$; note that there is thus one point on average, per unit of area.

We first recall a definition and lemma used in \cite{FP1}.  Given $\e,D>0$ and a finite set of points $S\subseteq \mathbb{R}^d$ and a universe $Y$, we say that $T\subseteq Y$ is an $(\e,D)$-copy of $S$ if there is a bijection $f$ between $T$ %and a point set $S'$ congruent to $S$ (i.e., equivalent to $S$ under an isometry 
and $S$ %of $\Re^d$) 
such that $||{x}-{f(x)}||<\e$ for all $x\in T$, and such that $T$ is at distance $>D$ from $Y\setminus T$.  First we have:
\begin{proposition}[see Observation 2.1 in \cite{FP1}] \label{smiley}
Given any finite point set $S$, any $\e>0$, and any $D$, $\cY_n$ w.h.p contains at least $C^S_{\e,D}n$ $(\e,D)$-copies of $S$, for some constant $C^S_{\e,D}>0$.
\end{proposition}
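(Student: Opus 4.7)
The plan is a straightforward first-moment-and-concentration argument. Scale so that $\cY_n$ is $n$ i.i.d.\ uniform points in $[0,\sqrt n]^2$, an area of $n$. Assume without loss of generality that $\e$ is smaller than half the minimum pairwise distance in $S$ (a smaller $\e$ only strengthens the conclusion, so we may shrink it). Fix a constant $L=L(S,\e,D)$ larger than $\diam(S)+2(D+\e)$, and partition $[0,\sqrt n]^2$ into $m=\Theta(n)$ disjoint axis-aligned cells $Q_1,\ldots,Q_m$ of side $L$. In the interior of each $Q_j$ fix a translated copy $S_j$ of $S$ whose $(D+\e)$-neighborhood $N_{D+\e}(S_j)$ is contained in $Q_j$; in particular the sets $N_{D+\e}(S_j)$ are pairwise disjoint.

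For each $j$ let $E_j$ be the event that (i) each ball $B(s,\e)$, $s\in S_j$, contains exactly one point of $\cY_n$, and (ii) the buffer region $N_{D+\e}(S_j)\setminus\bigcup_{s\in S_j} B(s,\e)$ contains no point of $\cY_n$. Under $E_j$ the set $T_j := \cY_n\cap N_\e(S_j)$ has exactly $|S|$ elements, is in the obvious bijection with $S_j$ (hence with $S$) with displacements at most $\e$, and every point of $\cY_n\setminus T_j$ lies outside $N_{D+\e}(S_j)$ and is therefore at distance more than $D$ from $T_j\subseteq N_\e(S_j)$. Thus $E_j$ witnesses an $(\e,D)$-copy of $S$.

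The probability $\Pr(E_j)$ is a multinomial occupancy probability on regions of constant total area inside a universe of area $n$. With $k=|S|$, $\alpha=\pi\e^2$, and $\beta$ the area of the buffer region,
\[\Pr(E_j) \;=\; \frac{n!}{(n-k)!}\bfrac{\alpha}{n}^{k}\bfrac{n-k\alpha-\beta}{n}^{n-k} \;\longrightarrow\; \alpha^k e^{-(k\alpha+\beta)} \;>\;0\]
as $n\to\infty$. Hence $X:=\sum_{j}\mathbf{1}_{E_j}$ satisfies $\E X \geq c\,n$ for some constant $c=c(S,\e,D)>0$.

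For concentration, note that $E_j$ depends only on which points of $\cY_n$ lie in $N_{D+\e}(S_j)\subseteq Q_j$, and the cells $Q_j$ are pairwise disjoint. Consequently relocating a single point $\bx_i$ can flip $\mathbf{1}_{E_j}$ for at most two values of $j$ (those containing its old and new positions). McDiarmid's bounded-differences inequality with $c_i=2$ then gives $\Pr(X<\E X/2)\leq\exp(-\Omega(n))$, so $X\geq (c/2)n$ w.h.p., and we may take $C^S_{\e,D}=c/2$. The only mildly delicate step is the evaluation of $\Pr(E_j)$, which reduces to a routine multinomial calculation; no substantive obstacle arises.
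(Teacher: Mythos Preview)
Your argument is correct. The paper does not supply its own proof of this proposition; it simply imports the statement from \cite{FP1} (where it appears as Observation~2.1) and uses it as a black box. Your cell-partition/McDiarmid proof is the standard way to establish such a statement and is essentially what one finds in \cite{FP1}: grid the square into constant-size cells, plant a translate of $S$ with a buffer in each, compute that each cell independently succeeds with constant probability, and conclude by concentration. The one point worth noting is that the events $E_j$ are not literally independent (the $n$ points are shared), which is why you correctly fall back on bounded differences rather than a binomial/Chernoff bound on $X$ directly; an equally valid alternative would be to Poissonize, obtain genuine independence, and de-Poissonize.
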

  
This already suffices to show \eqref{zmsts}.  Indeed, apply Proposition \ref{smiley} to a set $S$ consisting of two points at unit distance, with $\ep=1/4$ and $D=4$.  We learn that w.h.p. the number $\kappa$ of $(\frac 1 4,4)$-copies of $S$ in $\cY_n$ satisfies $\kappa\geq Cn$ for a constant $C$.  Since the colors of edges are independent of the geometry of the points, we can apply Lemma \ref{repeat} with $\a=\k,\b=q$ to show that w.h.p. there are $\Omega(\kappa)$ colors that are repeated among the $\kappa$ edges which join the two points in a given copy.  Thus for any rainbow MST, there are at least $\Omega(n)$ copies in which it does not use the edge between the two points.  We can add this edge and delete one of the edges leaving the copy to produce a spanning tree shorter by at least unit distance.  Doing this for each of these $\Omega(n)$ copies produces a spanning tree shorter than the rainbow MST by $\Omega(n)$.  Upon rescaling to the unit square, this corresponds to the existence of a spanning tree of length $\Omega(\sqrt{n})$ less than the length of the rainbow MST.

To prove \eqref{ztsps}, we will additionally use the following results from \cite{FP1}:
\begin{proposition}[Observation 2.9 in \cite{FP1}]\label{2meansstraight}
Suppose that $S_{\e,D}$ is an $(\e,D)$ copy of a fixed set $S$ for fixed $\e$ and sufficiently large $D$, and that at least 2 pairs of edges of a shortest TSP tour $\cL$ join $S_{\e,D}$ to $V\setminus S_{\e,D}$.  Then the pairs are nearly straight (i.e., the angle for each pair is arbitrarily close to $\pi$ as $\e\to 0$, and $k,D\to \infty$).  Moreover, any tour without this property can be shortened by a length bounded below by a function just of $\ep$, $S$, and $D$ by changing just how it passes through $S_{\ep,D}$.
\end{proposition}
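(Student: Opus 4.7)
The plan is to derive the pair-angle constraint from a 3-opt rearrangement that merges two visits to $S_{\e,D}$ into one. Pick any two visits of $\cL$, with external endpoints $x_1,y_1$ and $x_2,y_2$, indexed so the cyclic order of $\cL$ reads $\ldots x_1, P_1, y_1, \ldots, x_2, P_2, y_2, \ldots, x_1$, where $P_i$ is the internal path inside $S_{\e,D}$ from $v_{\mathrm{in},i}$ (adjacent to $x_i$) to $v_{\mathrm{out},i}$ (adjacent to $y_i$). Since $\diam(S_{\e,D}) \leq \diam(S) + 2\e$, fixing a reference point $p \in S_{\e,D}$ yields $|v - q| = |p - q| + O(\diam(S) + \e)$ for every $v \in S_{\e,D}$ and external $q$.

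The move I would analyze removes $(v_{\mathrm{out},1},y_1)$, $(x_2,v_{\mathrm{in},2})$, $(v_{\mathrm{out},2},y_2)$ and inserts $(v_{\mathrm{out},1},v_{\mathrm{in},2})$, $(v_{\mathrm{out},2},y_1)$, $(x_2,y_2)$, concatenating $P_1$ and $P_2$ into a single pass through $S_{\e,D}$; a direct trace confirms the result is a single Hamilton cycle. Collecting terms and using the $p$-approximation, the cost change is
\[
\Delta = \bigl(|x_2 - y_2| - |p - x_2| - |p - y_2|\bigr) + O(\diam(S) + \e).
\]
Letting $\alpha_2 = \angle x_2 p y_2$, the law of cosines gives $|x_2 - y_2| \leq |p - x_2| + |p - y_2| - \frac{|p-x_2|\,|p-y_2|\,(1+\cos\alpha_2)}{|p-x_2|+|p-y_2|}$; with $|p-x_2|, |p-y_2| \geq D$ this subtracted gap is at least $D(1 - \cos\alpha_2)/2$, so $\Delta \leq -D(1-\cos\alpha_2)/2 + O(\diam(S) + \e)$.

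Optimality of $\cL$ forces $\Delta \geq 0$, hence $\pi - \alpha_2 = O\bigl(\sqrt{(\diam(S)+\e)/D}\bigr)$, which tends to $0$ as $\e \to 0$ and $D \to \infty$. The symmetric 3-opt obtained by swapping the roles of the two visits in the reconstruction (so the added external edge is $(x_1,y_1)$ instead of $(x_2,y_2)$) forces the analogous bound on $\alpha_1$; when $\cL$ has more than two visits, iterating over any pair handles every pair angle. The explicit $D(1-\cos\alpha)/2$ gap also delivers the ``moreover'' claim: whenever some $\alpha_i$ is bounded away from $\pi$ by a fixed amount, the 3-opt move saves at least $\Omega(D) - O(\diam(S)+\e)$, a lower bound depending only on $\e$, $S$, and $D$.

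The hardest step will be checking that the 3-opt reconstruction always assembles into a single Hamilton cycle rather than two disjoint cycles, which depends on the cyclic arrangement of the four external endpoints; in the ``other'' orientation one must use the alternative pairing of the three added edges, and a uniform argument has to be given across orientations. The remaining estimates are routine triangle-inequality bookkeeping, with the $D$-dependence in the cosine gap carrying all the quantitative weight.
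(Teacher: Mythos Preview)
The paper does not supply its own proof here; the proposition is quoted from \cite{FP1} and used as a black box in Section~\ref{sec:separate}, so there is no in-paper argument to compare your attempt against.

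On its own merits your 3-opt argument is sound and is the natural route. Two small remarks. First, there is a sign slip: the law-of-cosines gap is $\tfrac{ab(1+\cos\alpha)}{a+b}\ge \tfrac{D}{2}(1+\cos\alpha)$, not $\tfrac{D}{2}(1-\cos\alpha)$; since $1+\cos\alpha\sim\tfrac12(\pi-\alpha)^2$ near $\alpha=\pi$, your stated conclusion $\pi-\alpha_2=O\bigl(\sqrt{(\diam S+\e)/D}\bigr)$ is unaffected. Second, your closing worry about the reconstruction splitting into two cycles is unnecessary: once you have fixed the cyclic labeling $x_1,P_1,y_1,\ldots,x_2,P_2,y_2,\ldots$ as you did, the specific move you describe always closes into a single Hamilton cycle (your own trace already confirms this), and there is no separate ``other orientation'' to handle. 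The only point worth an added sentence is that the inserted edge $(x_2,y_2)$ lies entirely outside $S_{\e,D}$; this does not affect the quantitative shortening bound, and since $(x_2,y_2)$ is precisely what replaces the second traversal of the copy, the modification remains local in the sense the ``moreover'' clause intends.
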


\begin{proposition}[see Observation 2.10 in \cite{FP1}]
  \label{nothree}
Suppose that $S_{\e,D}$ is an $(\e,D)$-copy of any fixed set $S$ for fixed $\e$ and sufficiently large $D$.  Then there are at most 2 pairs of edges in a shortest TSP tour which join $S_{\e,D}$ to $V\setminus S_{\e,D}$.  Moreover, any tour with at least 3 such pairs can be shortened by a length bounded below by a function just of $\ep$, $S$, and $D$ by changing just how it passes through $S_{\ep,D}$.
\end{proposition}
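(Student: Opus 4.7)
Proof proposal for Proposition~\ref{nothree}:

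Suppose the tour makes $k\geq 3$ ``runs'' through $S_{\e,D}$ --- that is, $k$ maximal tour-subpaths lie in $S_{\e,D}$ --- so that there are $2k \geq 6$ crossing edges between $S_{\e,D}$ and $V\setminus S_{\e,D}$. The plan is to exhibit a 2-opt swap that shortens the tour by a positive amount depending only on $\e$, $S$, and $D$. Set $M := \mathrm{diam}(S) + 2\e$, so that $S_{\e,D}$ has diameter at most $M$ while every crossing has length at least $D$.

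The crucial observation is the following. Orient the tour and classify each crossing as either \emph{entering} (going from $V\setminus S_{\e,D}$ into $S_{\e,D}$) or \emph{exiting}. Each run has exactly one entering and one exiting crossing, so there are $k$ of each type. Any two entering crossings are tour-non-adjacent: they come from different runs (hence have distinct inside endpoints), and they cannot share an outside endpoint (which would force a single outside vertex to have two distinct tour-successors). Moreover, for two same-directed crossings, the standard 2-opt reconnection is forced to be the ``same-side'' pairing, adding one internal edge and one external edge rather than two new crossings.

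Fix a reference point $p_0\in S_{\e,D}$ and, for each crossing $e=(u,v)$ with $u\in S_{\e,D}$, record the angular direction $\theta_e\in[0,2\pi)$ of $v-p_0$. Applying pigeonhole to the $k\geq 3$ angles of the entering crossings yields two entering crossings $e_1=(u_1,v_1)$ and $e_2=(u_2,v_2)$ with $|\theta_{e_1}-\theta_{e_2}|\leq 2\pi/k\leq 2\pi/3$. I would perform the 2-opt swap: delete $e_1,e_2$ and insert the internal edge $(u_1,u_2)$ together with the external edge $(v_1,v_2)$. This yields a valid Hamilton cycle with $k-1$ crossing pairs. Writing $L_i:=|u_i-v_i|\geq D$, the triangle inequality combined with the law of cosines gives $|v_1-v_2|\leq M+\sqrt{L_1^2+L_2^2-2L_1 L_2\cos(2\pi/k)}$, and a short calculation shows
\[
L_1+L_2-\sqrt{L_1^2+L_2^2-2L_1 L_2\cos(2\pi/k)}\;\geq\;\frac{(1+\cos(2\pi/k))\,L_1 L_2}{L_1+L_2}\;\geq\;\frac{(1+\cos(2\pi/k))\,D}{2}.
\]
Combined with $|u_1-u_2|\leq M$, the total length decreases by at least $\tfrac{1+\cos(2\pi/k)}{2}\,D-2M$; since $k\geq 3$ forces $1+\cos(2\pi/k)\geq 1/2$, this is at least $D/4-2M$, which exceeds a positive constant $\delta(\e,S,D)>0$ once $D$ is sufficiently large compared to $\mathrm{diam}(S)$ and $\e$.

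The main subtlety is justifying that the 2-opt actually yields the non-crossing reconnection. In general 2-opt, only one of the two possible reconnections gives a single Hamilton cycle, and the ``good'' choice depends on the tour's orientation at the two edges. By restricting attention to two entering (equivalently, two same-directed) crossings, one simultaneously rules out the adjacency obstruction (so the 2-opt is well-defined) and guarantees that the valid reconnection is precisely the one that reduces the crossing count, rather than the alternative that would add two new crossings and negate the savings.
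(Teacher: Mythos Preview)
The paper does not supply its own proof of Proposition~\ref{nothree}; it is quoted from \cite{FP1} (Observation~2.10 there) and used as a black box in Section~\ref{sec:separate}. So there is nothing in-paper to compare against, and your argument must stand on its own. It does, up to two small points.

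First, the additive constant in your bound on $|v_1-v_2|$ is slightly off. With $R_i:=|v_i-p_0|$ one has $|R_i-L_i|\le|u_i-p_0|\le M$, and moving each $v_i$ by at most $M$ along its ray from $p_0$ changes $|v_1-v_2|$ by at most $2M$, not $M$. Thus the correct estimate is $|v_1-v_2|\le 2M+\sqrt{L_1^2+L_2^2-2L_1L_2\cos(2\pi/k)}$, and the final savings become at least $D/4-3M$, still positive once $D>12(\diam(S)+2\e)$.

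Second, your 2-opt inserts the edge $(v_1,v_2)$ between two \emph{external} vertices and implicitly reverses a global sub-arc of the tour. Since the statement promises a shortening ``by changing just how it passes through $S_{\e,D}$'', you should say explicitly that the only edges altered are the two deleted crossings, one new edge inside $S_{\e,D}$, and one new edge joining two immediate external neighbours of $S_{\e,D}$; the sub-arc reversal leaves every edge-length unchanged. Any reduction in the number of runs must reconnect the severed external paths somehow, so this is the natural reading of ``local'', and it is exactly what the application in Section~\ref{sec:separate} requires.
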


\begin{figure}[t]
\begin{center}
\begin{pdfpic}
\psset{unit=1cm,dotsize=3pt}
\begin{pspicture}(-1.5,-1)(2.5,2.866)
%\psgrid
\SpecialCoor
\rput{0}(1,0){
\psdot(.1,0)
\psdot(-.1,0)
%\psdot(0,.173)
}

\rput{0}(-1,0){
\psdot(.1,0)
\psdot(-.1,0)
%\psdot(0,.173)
}
\rput{0}(0,1.73){
\psdot(.1,0)
\psdot(-.1,0)
%\psdot(0,.173)
}

\psline(-1.5,-.866)(-.9,0)(-1.1,0)(-1,.173)(.1,1.73)(-.1,1.73)(0,1.903)(.5,2.598)

\psline(.5,-.866)(.9,0)(1.1,0)%(1,.173)
(2.5,2.5)

\end{pspicture}
\end{pdfpic}
\end{center}
\caption{\label{f.S} An instance of the set $S$ used, together with an example of how an (uncolored) optimum tour may traverse this set in a way consistent with Propositions \ref{nothree} and \ref{2meansstraight}.}
\end{figure}

We now let $v_1,v_2,v_3$ denote the vertices of a fixed unit equilateral triangle, and choose a large $D$, small $\delta$ and even smaller $\ep$, so that $1\gg\delta\gg \ep>0$, and apply \ref{smiley} to a set $S$ consisting of three pairs of distinct points $p_i,q_i$, $i=1,2,3$, where the points $p_i,q_i$ are both within distance $\delta$ of $v_i$ (Figure \ref{f.S}).  In particular, we learn that there are $\kappa\geq Cn$ $(\ep,D)$-copies of this set $S$ in $\cY_n$.  

Propositions \ref{2meansstraight} (applied both to $S$ and to the individual pairs $\{p_i,q_i\}$ separately) and \ref{nothree} (applied to $S$) now imply that for each such copy, any rainbow tour through $\cY_n$ tour either:
\begin{enumerate}[(a)]
\item Traverses the copy that in a way that uses all of the edges $\{p_i,q_i\}$, $i=1,2,3$, or else
\item \label{gamma} Traverses the copy in a way that can be improved by at least a fixed distance $\gamma>0$ by only changing the path(s) the tour takes through the copy.
\end{enumerate}

Our goal is to show that there are a linear number of copies for which the optimum rainbow tour falls into case \ref{gamma}; this then implies that in the (rescaled) unit square, the optimum rainbow tour is longer by an additive factor of $\Omega(\sqrt{n})$.  Note that overall, there are $3\kappa\geq 3Cn$ edges $\{p_i,q_i\}$ over all $i=1,2,3$ and all $(\ep,D)$-copies of $S$.  The colors of edges are independent of the geometry of the points. Applying Lemma \ref{repeat} with $\a=3\k,\b=q$ we see that w.h.p. there are $\Omega(n)$ colors that are repeated in the $(\ep,D)$-copies of $S$. Thus w.h.p. there are $\Omega(n)$ instances of case (\ref{gamma}) with respect to the rainbow tour, as desired.

\section{Proof of Theorem \ref{th2}}\label{sec:spread}
Here we assume that the costs $C(i,j)$ are independent uniform $[0,1]$. The general idea is to replace an edge $(i,j)$ of color $c$ with an edge $\set{i,c,j}$ in a random 3-uniform hypergraph. There is a technical problem in that we are not allowed to have $\set{i,c,j},\set{i,c',j}$ for $c\neq c'$. We deal with this by merging the arguments of Frankston, Kahn, Narayanan and Park \cite{FKNP} and Bell, Frieze and Marbach \cite{BFM}. 

Let $X$ be a finite set and let $\cH$ be an $r$-bounded hypergraph on $X$. For $k \geq r$, let $X^* = X \times [k]$. Let $\cH^*$ denote the family consisting of all possible rainbow copies of $\cH$ using the color set $[k]$.  Let $\xi_x\,(x\in X)$ be independent random variables, each uniform from $[0,1]$, and set
\[
\xi_{\cH}=\min_{S\in \cH^*}\sum_{x\in S}\xi_x.
\]
We prove
\begin{theorem}\label{rweight}
Let $\cH$ be a $\k$-spread, $r$-uniform hypergraph on vertex set $X$. Suppose that (i) $\k=\Omega(r)$ and (ii) $|X|\leq \k^2r/\log^5r$. If we randomly color the elements of $\cH$ with $k\geq r$ colors, then $\xi_\cH\leq 3C_0r/\k$ w.h.p. (assuming $r\to\infty$) for some absolute constant $C_0>0$.
\end{theorem}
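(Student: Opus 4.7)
The plan is to transform the weighted-minimization problem into a threshold question on the rainbow hypergraph $\cH^*$, and then run the iterative spread-covering argument of \cite{FKNP} in the rainbow form developed by \cite{BFM}.

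Set $p = 3C_0/\kappa$ and (reading $\xi_x$ as indexed by $x \in X^*$) let $W = \{x \in X^* : \xi_x \leq p\}$, which is a $p$-random subset of $X^*$. If $W \supseteq S$ for some $S \in \cH^*$, then $\xi_\cH \leq \sum_{x \in S} \xi_x \leq rp = 3C_0 r/\kappa$; so it suffices to prove
\[
\Pr[\exists S \in \cH^* : S \subseteq W] = 1 - o(1).
\]

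The key structural fact is that $\cH^*$ is $\kappa^*$-spread with $\kappa^* \geq \kappa r/e$. For $T \subseteq X^*$ of size $t$ with distinct $X$-coordinates (otherwise no $S \in \cH^*$ contains $T$),
\[
\frac{|\{S \in \cH^* : T \subseteq S\}|}{|\cH^*|} \;=\; \frac{|\{H \in \cH : \pi_X(T) \subseteq H\}|}{|\cH|} \cdot \frac{(k-t)!}{k!} \;\leq\; \frac{1}{\kappa^{t} \cdot k(k-1)\cdots(k-t+1)}.
\]
For $k \geq r$, Stirling gives $k(k-1)\cdots(k-t+1) \geq r!/(r-t)! \geq (r/e)^{t}$ uniformly in $t \in [1,r]$ (the worst case being $t=r$). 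Thus the rainbow condition amplifies the spread by a factor of order $r$, which is the single most important gain in the proof.

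With this spread in hand, apply the \cite{FKNP} iterative spread-covering lemma to $\cH^*$ on $X^*$: run $L = \Theta(\log r)$ rounds of $(p/L)$-sampling, and at each round either find $S \in \cH^*$ with $S \subseteq W$ or enlarge a partial cover. The per-round failure probability at a fixed partial cover is $\exp(-\Omega(p\kappa^*/L))$, while the union bound is over partial covers, bounded by $|X^*|^{r}$; the hypothesis $|X| \leq \kappa^{2}r/\log^{5} r$ (with $\kappa = \Omega(r)$) gives $\log|X^*| = O(\log \kappa)$, which provides exactly the room needed for the union bound to beat the per-round gain over all $L$ rounds -- the $\log^{5} r$ factor in the hypothesis tracks the polylogarithmic overheads of the iteration.

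The main obstacle is running the FKNP iteration faithfully in the rainbow setting: at each round, a partial cover must be chosen so as to be extendable to a full color-injective copy, which requires tracking partial covers together with partial rainbow colorings -- the technical contribution of \cite{BFM}. The spread inflation $\kappa^* \geq \kappa r/e$ (the gain from the rainbow condition) is precisely what allows the FKNP blueprint to push through with only bookkeeping modifications for color-injectivity; the technical problem flagged in the paper -- that distinct hyperedges $\{i,c,j\}, \{i,c',j\}$ may not coexist -- manifests here as the requirement that the $X$-projection of any candidate $S \in \cH^*$ be an injection, which is enforced throughout the iteration via the partial-coloring bookkeeping.
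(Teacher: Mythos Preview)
The proposal has a genuine gap stemming from a misreading of the model. The weights $\xi_x$ are indexed by $x\in X$, not by $x\in X^*$: each element of $X$ carries one weight and then receives one random color. The relevant random set in $X^*$ is therefore $\{(x,c(x)):\xi_x\le p\}$, which selects exactly one color per light element; it is not a $p$-random subset of $X^*$. Its marginal density in $X^*$ is only $p/k$, and the inclusions of $(x,j)$ and $(x,j')$ are fully dependent, so the FKNP lemma does not apply to it directly (handling exactly this dependence is the content of \cite{BFM}).

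This cancels your key gain. Your spread computation for $\cH^*$ is correct---the rainbow lift does boost the spread to $\kappa^*\asymp\kappa r$---but the available density in $X^*$ drops simultaneously by a factor $k\ge r$. The two effects offset, and after translating back to $X$ the iteration still needs total density $\Theta(\log r/\kappa)$ to fully cover an edge, which is just the \cite{BFM} threshold. Your single-threshold bound $\xi_\cH\le r\cdot p$ then yields only $O(r\log r/\kappa)$, a $\log r$ off the claim.

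The paper removes the $\log r$ by a layered-cost argument rather than a single threshold: sort $X$ by $\xi$, take successive blocks $W^*_1,W^*_2,\ldots$ each of density $\Theta(1/\kappa)$, and use the \cite{BFM} iteration (Lemma~\ref{ML}) to produce a rainbow edge $H^*$ with $|H^*\setminus\bigcup_{j\le i}W^*_j|\le r/2^{i-1}$ for every $i$. One does \emph{not} force every element of $H^*$ below the first threshold; one instead bounds the cost by $\sum_i (r/2^{i-1})\cdot\max_{x\in W^*_i}\xi_x$, and the geometric decay of the residuals telescopes the $\log r$ away to give $O(r/\kappa)$. This per-round accounting of cost is the idea your proposal is missing.
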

(The paper \cite{HY} removes conditions (i) and (ii), but the proof doesn't adapt well to deal with the costs, as far as we can see.)
\begin{proof}
We let $\ell_0$ be the least $i$ such that $r/2^i<\log r$, and let $\ell$ be the least $i>\ell_0$ such that $r/2^i<1$. We sort the elements of $X=\set{x_1,x_2,\ldots,x_N}$ so that $\xi_{x_1}<\xi_{x_2}<\cdots<\xi_{x_N}$. We then let $p_i=C_i/\k$ where 
\[
C_i=\begin{cases}C_0&i\leq \ell_0.\\\frac{\log r}{\log\log r}&\ell_0<i\leq \ell.\end{cases}
\]
Here $C_0$ is a large constant. Let $L_i=Np_i$ and define $n_0$ to be the smallest integer such that $2^{-n_0}\leq \log r$ and let $n$ be the least $i>n_0$ such that $r/2^i<1$. Then for $i\in[n]$ we let 
\[
a_i=\sum_{j=1}^iL_i\text{ and }\e_i=\frac{2a_i}{N}.
\]
Then let $W^*_1=\set{x_1,x_2,\ldots,x_{a_1}},\,W^*_2=\set{x_{a_1+1},x_{a_1+2},\ldots,x_{a_2}}$ etc. Note that $W^*_1,W^*_2,\ldots,W^*_n$ are randomly colored random sets of size $L_i$ . The Chernoff bounds imply that
\begin{align}
\Pr(\exists i:\max\set{\xi_x:x\in W^*_i}\geq \e_i)&=\sum_{i=1}^n\Pr(Bin\brac{N,\e_i}\leq\e_i N)\nn\\
& \leq \sum_{i=1}^n e^{-\e_iN/8}\leq 2e^{-a_1/4}=2e^{-C_0N/\k}.\label{max}
\end{align}
In \cite{BFM} (as we will explain shortly), it is proved that with probability $1-\frac{\log\log r}{\log^{1/4}r}$, there is a rainbow colored edge $e$ such that $|e\setminus \bigcup_{j\leq i}W^*_j|\leq r/2^{i-1}$. We will use the main theorem (and its proof which we discuss below) from \cite{BFM}:
\begin{theorem}[Theorem 2 in \cite{BFM}]\label{thrainbow}
Let $\cH$ be an $r$-bounded, $\k$-spread hypergraph and let $X=V(\cH)$ be randomly colored from $Q=[q]$ where $q\ge r$. Suppose also that (i) $\k=\Omega(r)$, that is, there exists a constant $L>0$ such that $\kappa\ge Lr$ for all valid $r$, and that (ii) $N\leq \k^2r/\log^5r$. Then there is a constant $C>0$ such that if 
\beq{mbounds}{
m\geq\frac{(C\log_2r)|X|}{\k}
}
then $X_m$ contains a rainbow colored edge of $\cH$ w.h.p. 
\end{theorem}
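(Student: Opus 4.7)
The plan is to adapt the Frankston--Kahn--Narayanan--Park (FKNP) proof of the fractional Kahn--Kalai conjecture to the rainbow setting, following the dyadic template of Bell--Frieze--Marbach. We expose $X_m$ as a union of $\ell = O(\log r)$ independent layers $W_1,\ldots,W_\ell$, each obtained by including every vertex with probability $p_i = C_i/\kappa$, and reveal the uniform color $c(x)\in[q]$ of each $x$ at the moment it enters a layer. Write $W^{\leq i} = W_1 \cup \cdots \cup W_i$. The quantity we track across stages is a candidate edge $e\in\cH$ together with a shrinking ``defect'' set $T_i\subseteq e$ with the property that $e\setminus T_i\subseteq W^{\leq i}$, the revealed colors on $e\setminus T_i$ are pairwise distinct, and $|T_i|\leq r/2^{i-1}$. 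At the final level $i=\ell$, the defect vanishes, yielding a rainbow edge $e\subseteq X_m$.

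The central step is a booster lemma: conditional on a valid state $(e,T_i)$ at stage $i$, with failure probability at most $\exp(-\Omega(|T_i|))$, the next layer $W_{i+1}$ contains a set $B$ such that one can find $(e',T_{i+1})$ with $|T_{i+1}|\leq |T_i|/2$, $e'\setminus T_{i+1} \subseteq W^{\leq i}\cup B$, and still rainbow on $e'\setminus T_{i+1}$. As in FKNP, this is established by a fractional/weighted averaging argument: the $\kappa$-spread hypothesis on $\cH$ guarantees enough ``half-refinements'' of $T_i$ in $\cH$, and $W_{i+1}$ intersects one of these refinements with the stated probability. Summing the per-stage failure probabilities over $i\leq \ell_0$ (where $r/2^{\ell_0}\asymp \log r$) gives $o(1)$ by geometric decay. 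The residual stages $\ell_0<i\leq \ell$, where straight exponential concentration becomes too weak, are handled by a ``reinforced'' second phase with $p_i = (\log r/\log\log r)/\kappa$ and a direct first-moment calculation that drives $|T_i|$ down to $0$; this is what produces the final $\log\log r/\log^{1/4}r$ type failure probability in BFM.

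The rainbow condition is incorporated into the booster count by restricting refinements to those whose new vertices avoid the at most $r$ colors already used on $e\setminus T_i$. Since $q\geq r$ and $\kappa=\Omega(r)$, every new vertex retains at least a constant fraction of available colors, so the effective spread parameter of the color-augmented family $\cH^*$ on $X^* = X\times [q]$ degrades by at most a constant factor compared to $\cH$. This constant degradation is absorbed into the definition of the $C_i$, and the FKNP booster estimate carries over verbatim to $\cH^*$.

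The main obstacle is the joint control of boosting concentration and rainbow-compatibility across all $\ell$ stages simultaneously, which is exactly what hypotheses (i) and (ii) are designed for. Condition (i) $\kappa=\Omega(r)$ guarantees that avoiding the used colors costs only a constant factor in spread at every stage. Condition (ii) $N\leq \kappa^2 r/\log^5 r$ provides the $\log^5 r$ slack needed to Chernoff-concentrate the booster counts over all $O(\log r)$ levels at once without the cumulative fluctuations swamping the per-stage spread gain. Verifying that $\Omega(\kappa)$-spread of $\cH^*$ survives all conditioning on partial color reveals, uniformly across the entire dyadic construction, is the technical heart of the argument and the step where I would expect to spend the most effort.
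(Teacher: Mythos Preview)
Your proposal is essentially the same approach as the one the paper describes (citing \cite{BFM}): expose $X_m$ in $O(\log r)$ dyadic layers $W^*_i$ with densities $C_i/\kappa$, use a booster lemma at each stage to halve the residual, run a two-phase schedule (constant $C_i$ down to residual size $\log r$, then $C_i=\log r/\log\log r$), and handle the rainbow constraint by passing to the color-augmented family $\cH^*$ on $X\times[q]$ whose spread degrades only by a constant under (i). The one cosmetic difference is that you phrase the booster lemma as a per-state statement about a single pair $(e,T_i)$, whereas BFM's Lemma~\ref{ML} is stated as a concentration bound on the \emph{fraction} of good edges in the whole family $\cH^*$; the two formulations are interchangeable for this argument, and your identification of (ii) as supplying the concentration slack for that lemma is correct.
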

 The proof of Theorem \ref{thrainbow} in \cite{BFM} proceeds by iteratively choosing random sets of colored elements $W^*_i$ (just as we have in our current proof). Theorem \ref{thrainbow} is proved by iteratively applying the lemma we state next. Here $\cH^*$ is the set of possible rainbow colored edges of $\cH$. For $H^* \in \cH^*$, $T^*(W^*,H^*)$ is $G^* \setminus W^*$ for some $G^* \in \cH^*$.
 \begin{lemma}[Lemma 5 in \cite{BFM}] \label{ML}
Let $H^*\in\cH^*$ be good with respect to $W^*$ if $H^*\sim W^*$ and $|T^*(W^*,H^*)|<r/2$. Let 
\[
\text{$\cS$ be the event $|\set{H^*\in \cH^*:H^*\text{ is good}}|<(1-\e)|\cH^*|(1-p)^r$.}
\]
Then
\[
\Pr(\cS)\leq \exp\set{-\frac{\e^2\k^2q(1-p)}{16e^3N}} +\frac{2K}{C_0^{r/3}\e\brac{1-p}^r}
\]
\end{lemma}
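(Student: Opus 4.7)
The plan is to follow the Frankston--Kahn--Narayanan--Park spreadness template, adapted to the rainbow setting. The argument splits into a first-moment lower bound on the number of good rainbow copies and a bounded-differences concentration around this expectation; the two additive terms in the conclusion correspond respectively to these two components.

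For the first moment, for each fixed $H^* \in \cH^*$ I would estimate $\Pr(H^* \text{ is good})$. The factor $(1-p)^r$ reflects that each vertex of $H$ lies outside $W^*$ (and is hence automatically compatible under $\sim$) with probability $1-p$. Conditional on the sampled portion $H \cap W$, the remaining task is to exhibit a rainbow-compatible extension $G^* \in \cH^*$ with $|G^* \setminus W^*| < r/2$. This is where the $\k$-spread enters: whenever $H \cap W$ is reasonably large, the spread guarantees $\Omega(\k)$ hyperedges $G \in \cH$ agreeing with $H$ on $W \cap H$, and since $q \geq r$ most of these admit rainbow colorings compatible with the already-sampled colors. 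Tracking these estimates should yield $\E[|\{H^*:H^*\text{ good}\}|] \geq (1-\e/2)|\cH^*|(1-p)^r$.

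For the concentration step, I would apply McDiarmid's bounded-differences inequality to $|\{H^*:H^*\text{ good}\}|$ viewed as a function of the independent vertex inclusions in $W^*$ and the random coloring. The $\k$-spread of $\cH$, combined with the $q$ available colors, bounds the effect of a single-coordinate change on the count by roughly $\k q / r$: changing one vertex (or its color) only flips the goodness of the $O(\k/r)$ hyperedges through it, scaled by the color dimension. Summing squared differences across the $N$ vertices, and converting $(1-p) \le e^{-p}$ in the standard FKNP-style bookkeeping, gives the exponential term $\exp\{-\e^2\k^2 q(1-p)/(16 e^3 N)\}$ with essentially the stated constant.

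The additive term $2K/(C_0^{r/3}\e(1-p)^r)$ then absorbs rare events where the first-moment estimate fails: rainbow copies whose intersection $|H \cap W|$ deviates so far from $rp$ that no extension of size $<r/2$ exists, or deep-overlap configurations that force few valid completions. A large-deviations bound on $|H \cap W|$, coupled with choosing $C_0$ large enough, produces the $C_0^{-r/3}$ decay, while the constant $K$ tracks a union bound over the relevant bad intersection patterns inherited from the spread condition. The main obstacle I anticipate is transferring the $\k$-spread of $\cH$ on $X$ to an effective spread on the colored universe $X^* = X \times [k]$ while simultaneously maintaining rainbow-compatibility of the extensions; this fusion of the FKNP spreadness argument with the colored framework of BFM is where the structure of the proof is most delicate and where the precise constants in the statement are genuinely forced.
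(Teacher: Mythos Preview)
First, the paper does not itself prove this lemma; it is quoted as Lemma~5 of \cite{BFM} and used as a black box in the proof of Theorem~\ref{rweight}. So any comparison is against the BFM argument, which the present paper only sketches.

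Your proposal has a genuine gap in the concentration step. The bounded-differences approach cannot work with the Lipschitz constant you claim: $\kappa$-spread says that at most $|\cH|/\kappa$ edges of $\cH$ contain any fixed vertex, not $O(\kappa/r)$. Since $|\cH|$ (and hence $|\cH^*|$) is typically super-exponentially large---for Hamilton cycles it is $(n-1)!/2$---changing one coordinate can flip the goodness status of an enormous number of $H^*$, and McDiarmid gives nothing useful. The FKNP machinery is fundamentally not a concentration-around-the-mean argument for the count of good edges.

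The actual source of the $C_0^{-r/3}$ term is the FKNP \emph{fragment} (or \emph{cover}) argument, adapted in \cite{BFM} to the colored universe $X^*$: one shows that each bad $H^*$ admits a small witness $Z\subseteq H^*$, then counts (bad $H^*$, $W^*$) pairs by summing over possible witnesses and invoking $\kappa$-spread; Markov's inequality converts this count into the second additive term. The exponential term is an auxiliary estimate specific to the rainbow setting (controlling, roughly, how evenly the colors are distributed in $W^*$) rather than the main engine. So both of your attributions are off: the $C_0^{-r/3}$ term is the heart of the spreadness argument, not a large-deviations tail on $|H\cap W|$, and the exponential term is a side condition, not concentration of the good-edge count via bounded differences.
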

In other words, so long as the unlikely event $\cS$ does not occur, we have that for each rainbow colored edge $H^*$, there is some rainbow colored edge $G^*$ that is mostly contained in $W^*$. The proof of Theorem \ref{thrainbow} proceeds by iteratively applying Lemma \ref{ML}, each time replacing $\cH^*$ with the hypergraph whose edges are the $T^*(W^*, H^*)$ (i.e. portions of edges of $\cH^*$ which are not contained in $W^*$). In \cite{BFM} they show that during all applications of the Lemma \ref{ML}, 
 the unlikely event $\cS$ never happens w.h.p. This means that the rainbow edge $H^*$ we obtain in the end was good for every application of Lemma \ref{ML}. In particular, we have
    \beq{forcost}{
|H^*\setminus \bigcup_{j\leq i}W^*_j|\leq \frac{r}{2^{i-1}}, \qquad \text{i=1, \ldots, $\ell$.}
}
In our setting, since $W^*_i$ was the set of vertices of cost at most $\e_i$, the rainbow edge $H^*$ we obtain from Theorem \ref{thrainbow} costs at most
\mults{
\sum_{i=1}^n\frac{r\e_i}{2^{i-1}}=\frac{r}{N}\sum_{i=1}^n\frac{1}{2^{i-1}}\sum_{j=1}^iNp_i\\
=r\brac{\sum_{i=1}^{\ell_0}\frac{C_i}{2^{i-1}\k}+\sum_{i=\ell_0+1}^\ell\frac{C_{\ell_0}+(i-\ell_0)\log r}{2^{i-1}\k}}\leq \frac{3C_0r}{\k}=O(1).
}

\end{proof}
All of the examples given in Theorem \ref{th2} have the requisite spread to justify the claim. For (1) -- (3) we have $r=O(n)$ and $\k=\Omega(n^{k-1})$. (For (3), we have $k=2$.) 

This concludes the proof of Theorem \ref{th2}.
\section{Observations and open questions}\label{sec:conclusion}
We have estimated the effect of adding a rainbow constraint in the probabilistic analysis of some random combinatorial optimization problems. For the Euclidean case we are tight up to a constant if we are dealing with the minimum spanning tree problem. For the TSP and $(1 + \e)n$ colors, there is a factor of $\log n$ between our lower and upper bounds. It is an open question as to whether this is actually necessary. 

Karp \cite{K2}  gave a polynomial time algorithm that is asymptotically optimal for the TSP in the unit square. It would be interesting to see if there is an analogous algorithm for the Rainbow TSP in the unit square. One also wonders if the algorithms of Arora \cite{Arora} or Mitchell \cite{Mit} can be adapted to the same problem.

Can we find the correct constants for any of the problems mentioned in Theorem \ref{th2}?


\begin{thebibliography}{99}
\bibitem{A1} D. Aldous, Asymptotics in the random assignment problem, {\em Probability Theory and Related Fields} 93 (1992) 507-534.
%
\bibitem{Arora} S. Arora, Polynomial time approximation schemes for Euclidean Traveling Salesman and other geometric problems, {\em Journal of the Association for Computing Machinery} 45 (1998) 753-782.
%
\bibitem{BHH}  J. Beardwood, J. H. Halton and J. M. Hammersley, The shortest path through many points, {\em Mathematical Proceedings of the Cambridge Philosophical Society} 55 (1959) 299-327.
%
\bibitem{BF} D. Bal and A.M. Frieze, Rainbow Matchings and Hamilton Cycles in Random Graphs, {\em Random Structures and Algorithms} 48 (2016) 503-523.
%
\bibitem{BFM} T. Bell, A.M. Frieze and T. Marbach, Rainbow Thresholds, {\em SIAM Journal on Discrete Mathematics} 38 (2024) 2361-2369.
%
\bibitem{EFK} L. Espig, A.M. Frieze and M. Krivelevich, Elegantly colored paths and cycles in edge colored random graphs, {\em SIAM Journal on Discrete Mathematics} 32 (2018) 1585-1618.
%
\bibitem{FKNP} K. Frankston, J. Kahn, B. Narayanan and J. Park, Thresholds versus fractional expectation thresholds, {\em Annals of Mathematics} 194 (2021) 475-495.
%
\bibitem{F1} A.M. Frieze, On the value of a random minimum spanning tree problem, {\em Discrete Applied Mathematics} 10 (1985) 47-56.
%
\bibitem{FriezeHam} A.M. Frieze, Limit distribution for the existence of hamiltonian cycles in random bipartite graphs, {\em European Journal of Combinatorics} 6 (1985) 327-334.
%
\bibitem{F2} A.M. Frieze, On random symmetric travelling salesman problems, {\em Mathematics of Operations Research} 29 (2004) 878-890.
%
\bibitem{FK} A.M. Frieze and M. Karo\'nski, \href{https://www.math.cmu.edu/~af1p/BOOK.pdf}{Introduction to Random Graphs}, Cambridge University Press, 2015.
%
\bibitem{FP1} A.M. Frieze and W. Pegden, Separating subadditive Euclidean functionals, {\em Random Structures and Algorithms} 51 (2017) 375-403.
%
\bibitem{HY} J. Han and X. Yuan, \href{/https://arxiv.org/pdf/2310.03974}{On rainbow thresholds}.
%
\bibitem{Harris} T. Harris  A lower bound for the critical probability in a certain percolation process, {\em Mathematical Proceedings of the Cambridge Philosophical Society} 56 (1960) 1-20.
%
\bibitem{K2} R.M.~Karp, Probabilistic Analysis of Partitioning Algorithms for the Traveling-Salesman Problem in the Plane, {\em Mathematics of Operations Research} 2 (1977) 209-244.
%
\bibitem{K1} R.M.~Karp, A patching algorithm for  the non-symmetric traveling salesman problem, {\em SIAM Journal on Computing} 8 (1979) 561-573.
%
\bibitem{Law} E. Lawler, Combinatorial Optimization: Networks and Matroids, Dover Publications, 1976.
%
\bibitem{Mc80} C. McDiarmid, Clutter percolation and random graphs, {\em Mathematical Programming Studies} 13 (1980) 17-25.
%
\bibitem{McD} C. McDiarmid, On the method of bounded differences, {\em in Surveys in Combinatorics, ed. J. Siemons, London Mathematical Society Lecture Notes Series} 141, Cambridge University Press, 1989.
%
\bibitem{Mit} J. Mitchell, Guillotine Subdivisions Approximate Polygonal Subdivisions:
A simple polynomial-time approximation scheme for geometric TSP, k-MST, and related problems, {\em SIAM Journal on Computing} 28 (1999) 1298-1309. 
%
\bibitem{PP} J. Park and H.~T. Pham, A proof of the Kahn-Kalai conjecture, {\em J. Amer. Math. Soc.}  37 (2024), no.~1, 235-243.
%
\bibitem{Steele1} J. Michael Steele, Probability Theory and Combinatorial Optimization, SIAM CBMS series, 1996.
%
\bibitem{W1} D. Walkup,  Matchings in random regular bipartite graphs, {\em Discrete Mathematics} 31 (1980) 59-64.
%
\bibitem{War} L.~Warnke, On the method of typical bounded differences, {\em Combinatorics, Probability and Computing} \textbf{25} (2016), 269-299.
%
\bibitem{Wast1} J. Wastl\"und, The mean field traveling salesman and related problems, {\em Acta Mathematica} 204 (2010) 91-150.
\end{thebibliography}
\end{document}